\providecommand{\U}[1]{\protect\rule{.1in}{.1in}}
\newtheorem{theorem}{Theorem}
\newtheorem{corollary}[theorem]{Corollary}
\newtheorem{definition}[theorem]{Definition}
\newtheorem{lemma}[theorem]{Lemma}
\newtheorem{proposition}[theorem]{Proposition}
\newtheorem{remark}[theorem]{Remark}
\newcommand\supp{\mathop{\rm supp}}
\begin{document}

\title[Atomic and molecular decomposition of weighted Hardy spaces]{On the atomic and molecular decomposition of weighted Hardy spaces}
\author{Pablo Rocha}
\address{Departamento de Matem\'atica, Universidad Nacional del Sur, Av. Alem 1253 - Bah\'{\i}a Blanca 8000, Buenos
Aires, Argentina.}
\email{pablo.rocha@uns.edu.ar}
\thanks{\textbf{Key words and phrases}: Weighted Hardy Spaces, Singular integrals, Fractional Operators.}
\thanks{\textbf{2.010 Math. Subject Classification}: 42B15, 42B25, 42B30.}

{\it May 2023 - Corrected version of the article  \\ published in Rev. Un. Mat. Arg. (61), No. 2, (2020), 229-247}

\begin{abstract}
The purpose of this article is to give another molecular decomposition for members of weighted Hardy spaces different from that given by
Lee and Lin [J. Funct. Anal. {\bf 188} (2002), no. 2, 442–460], and to review some overlooked details.
As an application of this decomposition, we give a necessary and sufficient condition and a sufficient condition for the 
$H^{p}_{w}-L^{p}_{w}$ and the $H^{p}_{w}-H^{p}_{w}$ boundedness, respectively, of a bounded linear operator on some 
$L^{p_0}(\mathbb{R}^{n})$ with $1 < p_0 < +\infty$, for all weight $w \in \mathcal{A}_{\infty}$ and all 
$0 < p  \leq 1$ if $1< \frac{r_w -1}{r_w} p_0$ or all $0 < p < \frac{r_w -1}{r_w} p_0$ if $\frac{r_w -1}{r_w} p_0 \leq 1$, where $r_w$ is the critical index of $w$ for the reverse H\"older condition. In particular, the well known results about boundedness of singular integrals from
$H^{p}_w(\mathbb{R}^{n})$ into $L^{p}_{w}(\mathbb{R}^{n})$ and on $H^{p}_{w}(\mathbb{R}^{n})$ for all $w \in \mathcal{A}_{\infty}$ and all $0 <p \leq 1$ are established. We also obtain the $H^{p}_{w^{p}}(\mathbb{R}^{n}) - H^{q}_{w^{q}}(\mathbb{R}^{n})$ boundedness of the Riesz potential $I_{\alpha}$ for $0 < p \leq 1$, $\frac{1}{q}=\frac{1}{p} - \frac{\alpha}{n}$, and certain weights $w$. 
\end{abstract}
\maketitle

\section{Introduction}

The Hardy spaces on $\mathbb{R}^{n}$ were defined in \cite{F-S} by  C. Fefferman and E. Stein, since then the subject has received considerable attention. One of the most important applications of Hardy spaces is that they are good substitutes of Lebesgue spaces when $p \leq 1$. For example, when $p \leq 1$, it is well known that Riesz transforms are not bounded on $L^{p}(\mathbb{R}^{n})$; however, they are bounded on Hardy spaces $H^{p}(\mathbb{R}^{n})$.

To obtain the boundedness of operators, like singular integrals or fractional type operators, in the Hardy spaces $H^{p}(\mathbb{R}^{n})$, one can appeals to the atomic or molecular characterization of $H^{p}(\mathbb{R}^{n})$, which means that a distribution in $H^{p}$ can be represented as a sum of atoms or molecules. The atomic decomposition of elements in $H^{p}(\mathbb{R}^{n})$ was obtained by Coifman in \cite{coifman} ($n=1$), and by Latter in \cite{latter} ($n \geq 1$). In \cite{T-W}, Taibleson and Weiss gave the molecular decomposition of elements in $H^{p}(\mathbb{R}^{n})$. Then the boundedness of linear operators in $H^{p}$ can be deduced from their behavior on atoms or molecules in principle.
However, it must be mentioned that M. Bownik in \cite{bownik}, based on an example of Y. Meyer, constructed a linear functional defined on a dense subspace of $H^{1}(\mathbb{R}^{n})$, which maps all $(1, \infty, 0)$ atoms into bounded scalars, but yet cannot extended to a bounded linear functional on the whole $H^{1}(\mathbb{R}^{n})$. This implies that it not suffice to check that an operator from a Hardy spaces $H^{p}$, $0 < p \leq 1$, into some quasi Banach space $X$, maps atoms into bounded elements of $X$ to establish that this operator extends to  a bounded operator on $H^{p}$. Bownik's example is, in a certain sense, pathological. Fortunately, if $T$ is a classical operator, then the uniform boundedness of $T$ on atoms implies the boundedness from $H^{p}$ into $L^{p}$, this follows from the boundedness on $L^{s}$, $1 < s < \infty$, of $T$ and since one always can take an atomic decomposition which converges in the norm of $L^{s}$, see \cite{dachun} and \cite{rocha}.

The weighted Lebesgue spaces $L^{p}_{w}(\mathbb{R}^{n})$ are a generalization of the classical Lebes-gue spaces $L^{p}(\mathbb{R}^{n})$, replacing the Lebesgue measure $dx$ by the measure $w(x) dx$, where $w$ is a non-negative measurable function. Then one can define the weighted Hardy spaces $H^{p}_w(\mathbb{R}^{n})$ by generalizing the definition of $H^{p}(\mathbb{R}^{n})$ (see \cite{tor}). It is well known that the harmonic analysis on these spaces is relevant if the "weights" $w$ belong to the class $\mathcal{A}_{\infty}$.
The atomic characterization of $H^{p}_{w}(\mathbb{R}^{n})$ has been given in \cite{cuerva} and \cite{tor}. The molecular characterization of $H^{p}_{w}(\mathbb{R}^{n})$ was developed independently by X. Li and L. Peng in \cite{li} and by M.-Y. Lee and C.-C. Lin in \cite{lee}. In both works the authors obtained the boundedness of the classical singular integrals on $H^{p}_{w}$ for $w \in \mathcal{A}_1$. We extend these results for all $w \in \mathcal{A}_{\infty}$.

Given $w \in \mathcal{A}_{\infty}$, for us, a $w-(p, p_0, d)$ atom is a measurable function $a(\cdot)$ with support in a ball $B$ such that

\

$
(1) \,\, \| a \|_{L^{p_0}} \leq \displaystyle{\frac{|B|^{1/p_0}}{w(B)^{1/p}}},  \,\,\, \textit{and}
$

\

$
(2) \,\, \int x^{\alpha} a(x) \, dx =0, \,\,\,\, \textit{for all multi-index} \,\, |\alpha| \leq d,$ 

${}$ \\
where the parameters $p$, $p_0$ and $d$ satisfy certain restrictions. We remark that our definition of atom differs from that given in \cite{cuerva}, \cite{tor}.

Ones of our main results is Theorem \ref{serie atomica} of Section 2 below, this assure that:

\

\textit{If $w \in \mathcal{A}_{\infty}$ and $f $ belongs to a dense subspace of $H^{p}_w$ , then there exist a sequence of $w - (p, p_0, d)$ atoms $\{ a_j \}$ and a sequence of scalars $\{ \lambda_j \}$ with $\sum_{j} |\lambda_j |^{p} \leq c \| f \|_{H^{p}_{w}}^{p}$ such that $f = \sum_{j} \lambda_j a_j$, where the series converges to $f$ in $L^{s}(\mathbb{R}^{n})$, for all $s > 1$.}

\

With this result we avoid any problem that could arise with respect to establish the boundedness of classical operators on $H^{p}_{w}$. The verification of the convergence in $L^{s}$ for the infinite atomic decomposition was sometimes an overlooked detail. As far as the author knows, the above result has been proved for $w - (p, \infty,d)$ atoms in $\mathbb{R}$ by J. Garc\'{\i}a-Cuerva in \cite{cuerva}, and for $w-(p, \infty, d)$ atoms in $\mathbb{R}^{n}$ by D. Cruz-Uribe et al. in \cite{uribe}.

Given $w \in \mathcal{A}_{\infty}$, we say that a measurable funtion $m(\cdot)$ is a $w-(p, p_0, d)$ molecule  centered at a ball $B=B(x_0, r)$ if it satisfies the following conditions:

\

$(m1)$ $\| m \|_{L^{p_0}(B(x_0, 2r))} \leq |B|^{\frac{1}{p_0}} w(B)^{- \frac{1}{p}}$.

\

$(m2)$ $|m(x)| \leq w(B)^{-\frac{1}{p}} \left(  1 + \frac{|x-x_0|}{r} \right)^{-2n - 2d - 3}$, \, for all $x \in \mathbb{R}^{n} \setminus  B(x_0, 2r)$.

\

$(m3)$ $\int_{\mathbb{R}^{n}} x^{\alpha} m(x) dx =0$ for every multi-index $\alpha$ with $|\alpha| \leq d$.

\

Our definition of molecule is an adaptation from that given in \cite{nakai} by E. Nakai and Y. Sawano in the setting of the variable Hardy spaces. It is clear that a $w-(p, p_0, d)$ atom is a $w-(p, p_0, d)$ molecule. The pointwise inequality in $(m2)$ seems as a good substitute for "the loss of compactness in the support of an atom".

In Section 3, we obtain the following result (Theorem \ref{Hp estim mol} below):

\

\textit{ Let $0 < p \leq 1$, $w \in \mathcal{A}_{\infty}$ and $f \in \mathcal{S}'(\mathbb{R}^{n})$ such that $f = \sum_{j}  \lambda_j m_j$ in $\mathcal{S}'(\mathbb{R}^{n})$, where $\{ \lambda_j \}$ is a sequence of positive numbers  belonging to $\ell^{p}(\mathbb{N})$ and the functions $m_j$ are $(p, p_0, d)$-molecules centered at $B_j$ with respect to  the weight $w$. Then
$f \in H^{p}_{w}(\mathbb{R}^{n})$ with 
$$
\| f \|_{H^{p}_w}^{p} \leq C_{w, p, p_0}  \sum_{j} \lambda_{j}^{p}.
$$} 
With these results in Section 4 we re-establish the boundedness on $H^{p}_{w}$  and from $H^{p}_{w}$ into $L^{p}_{w}$ of certain singular integrals, for all $w \in \mathcal{A}_{\infty}$ and all $0 < p \leq 1$. We also obtain the $H^{p}_{w^{p}} - H^{q}_{w^{q}}$ boundedness of the Riesz potential $I_{\alpha}$, for $0 < p \leq 1$, $\frac{1}{q} = \frac{1}{p} - \frac{\alpha}{n}$ and certain weights $w$.

\

\textbf{Notation:} The symbol $A\lesssim B$ stands for the inequality $A\leq cB$ for some constant $c$. We denote by $B(x_0, r)$ the ball centered at $x_0 \in \mathbb{R}^{n}$ of radius $r$. Given a ball $B(x_0, r)$ and a constant $c>0$, we set $cB = B(x_0, cr)$. 
For a measurable subset $E \subset \mathbb{R}^{n}$ we denote $|E|$ and $\chi_E$ the Lebesgue measure of $E$ and the characteristic function of $E$ respectively. Given a real number $s \geq 0$, we write $\lfloor s \rfloor$ for the integer part of $s$. 
As usual we denote with $\mathcal{S}(\mathbb{R}^{n})$ the space of smooth and rapidly decreasing functions, with $\mathcal{S}'(\mathbb{R}^{n})$  the dual space. If $\beta$ is the multiindex $\beta=(\beta_1, ..., \beta_n)$,
then $|\beta| = \beta_1 + ... + \beta_n$. 

Throughout this paper, $C$ will denote a positive constant, not necessarily the same at each occurrence.

\section{Preliminaries}

\subsection{Weighted Theory}A weight is a non-negative locally integrable function on $\mathbb{R}^{n}$ that takes values in $(0, \infty)$ almost everywhere, i.e. : the weights are allowed
to be zero or infinity only on a set of Lebesgue measure zero.

Given a weight $w$ and $0 < p < \infty$, we denote by $L^{p}_{w}(\mathbb{R}^{n})$ the spaces of all functions $f$ satisfying $\| f \|_{L^{p}_{w}}^{p} := \int_{\mathbb{R}^{n}} |f(x)|^{p} w(x) dx < \infty$ . When $p=\infty$, we have that $L^{\infty}_{w}(\mathbb{R}^{n}) =L^{\infty}(\mathbb{R}^{n})$ with $\| f \|_{L^{\infty}_{w}} = \| f \|_{L^{\infty}}$. If $E$ is a measurable set, we use the notation $w(E) = \int_{E} w(x) dx$.

Let $f$ be a locally integrable function on $\mathbb{R}^{n}$. The function
$$M(f)(x) = \sup_{B \ni x} \frac{1}{|B|} \int_{B} |f(y)| dy,$$
where the supremum is taken over all balls $B$ containing $x$, is called the uncentered Hardy-Littlewood maximal function of $f$.

We say that a weight $w \in \mathcal{A}_1$ if there exists $C >  0$ such that
\begin{equation*}
M(w)(x) \leq C w(x), \,\,\,\,\, a.e. \, x \in \mathbb{R}^{n}, 
\end{equation*}
the best possible constant is denoted by $[w]_{\mathcal{A}_1}$. Equivalently, a weight $w \in \mathcal{A}_1$ if there exists $C >  0$ such that for every ball $B$
\begin{equation}
\frac{1}{|B|} \int_{B} w(x) dx \leq C \, ess\inf_{x \in B} w(x). \label{A1condequiv}
\end{equation}
\begin{remark} If $w \in \mathcal{A}_1$ and $0 < r < 1$, then by H\"older inequality we have that $w^{r} \in \mathcal{A}_1$.
\end{remark}
For $1 < p < \infty$, we say that a weight $w \in \mathcal{A}_p$ if there exists $C> 0$ such that for every ball $B$
$$\left( \frac{1}{|B|} \int_{B} w(x) dx \right) \left( \frac{1}{|B|} \int_{B} [w(x)]^{-\frac{1}{p-1}} dx \right)^{p-1} \leq C.$$
It is well known that $\mathcal{A}_{p_1} \subset \mathcal{A}_{p_2}$ for all $1 \leq p_1 < p_2 < \infty$. Also, if $w \in \mathcal{A}_{p}$ with $1 < p < \infty$, then there exists $1 < q <p$ such that $w \in \mathcal{A}_{q}$. We denote by $\widetilde{q}_{w} = \inf \{ q>1 : w \in \mathcal{A}_q \}$ \textit{the critical index of $w$.}

\begin{lemma} \label{doblante} If $w \in \mathcal{A}_{p}$ for some $1 \leq p < \infty$, then the measure $w(x) dx$ is doubling: precisely, for all $\lambda >1$ an all ball $B$ we have
\[
w(\lambda B) \leq \lambda^{np} [w]_{\mathcal{A}_p} w(B),
\]
where $\lambda B$ denotes the ball with the same center as $B$ and radius $\lambda$ times the radius of $B$.
\end{lemma}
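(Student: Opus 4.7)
The plan is to derive the stated doubling inequality directly from the $\mathcal{A}_p$ condition, treating the cases $p=1$ and $p>1$ separately.

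For $p=1$, I would use the pointwise equivalent form \eqref{A1condequiv} applied to the ball $\lambda B$: this gives $w(\lambda B) \leq |\lambda B|\, [w]_{\mathcal{A}_1} \operatorname{ess\,inf}_{x\in\lambda B} w(x)$. Since $B\subset \lambda B$, we have $\operatorname{ess\,inf}_{\lambda B} w \leq \operatorname{ess\,inf}_B w \leq w(B)/|B|$, and therefore $w(\lambda B) \leq \lambda^n [w]_{\mathcal{A}_1} w(B)$, which is in fact stronger than the claim (recall $\lambda^{np}=\lambda^n$ here).

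For $1<p<\infty$, I would first establish the following auxiliary ``set-comparison'' inequality: for any measurable $E\subset B$,
\[
\left(\frac{|E|}{|B|}\right)^{p} \leq [w]_{\mathcal{A}_p}\,\frac{w(E)}{w(B)}.
\]
To prove this, apply Hölder's inequality with exponents $p$ and $p/(p-1)$ to the integrand $1=w^{1/p}\cdot w^{-1/p}$ on $E$:
\[
|E| \leq \left(\int_E w\right)^{1/p}\left(\int_E w^{-1/(p-1)}\right)^{(p-1)/p}.
\]
Raising to the $p$-th power and enlarging the second integral to one over $B$ yields $|E|^p \leq w(E) \bigl(\int_B w^{-1/(p-1)}\bigr)^{p-1}$. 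The $\mathcal{A}_p$ condition rewritten is $\bigl(\int_B w^{-1/(p-1)}\bigr)^{p-1} \leq [w]_{\mathcal{A}_p}\, |B|^p/w(B)$, and substituting gives the auxiliary inequality.

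Finally, applying the auxiliary inequality with $E=B$ viewed as a subset of the ball $\lambda B$ yields
\[
\left(\frac{|B|}{|\lambda B|}\right)^{p} \leq [w]_{\mathcal{A}_p}\,\frac{w(B)}{w(\lambda B)},
\]
which is precisely $w(\lambda B) \leq \lambda^{np}[w]_{\mathcal{A}_p}\,w(B)$. There is no real obstacle here: the only care needed is to pair Hölder's inequality with the right form of the $\mathcal{A}_p$ condition; everything else is bookkeeping.
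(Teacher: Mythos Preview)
Your argument is correct. The $p=1$ case is handled cleanly via \eqref{A1condequiv}, and for $p>1$ your auxiliary set-comparison inequality
\[
\left(\frac{|E|}{|B|}\right)^{p} \leq [w]_{\mathcal{A}_p}\,\frac{w(E)}{w(B)}\qquad (E\subset B)
\]
is derived correctly from H\"older's inequality together with the $\mathcal{A}_p$ condition, and specializing to $E=B\subset \lambda B$ gives exactly the stated doubling bound.

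As for comparison with the paper: the paper states this lemma as a known fact and does not supply its own proof, so there is nothing to compare against. Your write-up is the standard textbook argument (see, e.g., Grafakos \cite{grafakos}), and it would serve perfectly well if a proof were to be included.
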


\begin{theorem} \label{max pesada} $($\textit{Theorem 9 in \cite{Muck}}$)$ Let $1 < p < \infty$. Then
\[
\int_{\mathbb{R}^{n}} [Mf(x)]^{p} w(x) dx \leq C_{w, p, n} \int_{\mathbb{R}^{n}} |f(x)|^{p} w(x) dx,
\]
for all $f \in L^{p}_{w}(\mathbb{R}^{n})$ if and only if $w \in \mathcal{A}_p$. 
\end{theorem}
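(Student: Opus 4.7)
The plan is to establish both directions of the equivalence. For the necessity direction ($M$ bounded on $L^p_w$ implies $w \in \mathcal{A}_p$), I would fix a ball $B$ and test the hypothesized inequality on the function $f = w^{-1/(p-1)} \chi_B$ (truncating $w^{-1/(p-1)}$ first if needed to ensure integrability, then letting the truncation parameter tend to infinity). For every $x \in B$ one has $Mf(x) \geq \frac{1}{|B|} \int_B w(y)^{-1/(p-1)}\, dy$, so the left-hand side dominates $\bigl(\frac{1}{|B|} \int_B w^{-1/(p-1)}\bigr)^p w(B)$ while the right-hand side equals $\int_B w^{-p/(p-1)} w\, dx = \int_B w^{-1/(p-1)}\, dx$. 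Dividing through by this common factor and rearranging yields exactly the $\mathcal{A}_p$ inequality.

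For sufficiency, the standard strategy is to prove the weighted weak-type $(p,p)$ inequality
\[
w(\{x : Mf(x) > \lambda\}) \leq \frac{C}{\lambda^p} \int_{\mathbb{R}^n} |f|^p w\, dx
\]
and then upgrade to strong type. Given the level set $\{Mf > \lambda\}$, I would cover it by balls $B$ satisfying $\frac{1}{|B|}\int_B |f|\, dy > \lambda$ and extract, via a Vitali-type covering, a pairwise disjoint subfamily $\{B_j\}$ whose fixed dilates still cover the level set. The doubling property of $w$ (Lemma 2) transfers this to $w(\{Mf > \lambda\}) \lesssim \sum_j w(B_j)$. For each ball, H\"older's inequality with conjugate exponents $p$ and $p' = p/(p-1)$ applied to $|f| = |f| w^{1/p} \cdot w^{-1/p}$ combined with the $\mathcal{A}_p$ condition (which bounds the product of the average of $w$ and the $(p-1)$-th power of the average of $w^{1-p'}$) gives
\[
\lambda^p w(B_j) \leq [w]_{\mathcal{A}_p} \int_{B_j} |f|^p w\, dx,
\]
and disjointness closes the weak-type bound after summation.

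To pass from weak type $(p,p)$ to strong type $(p,p)$, the key input is the openness of the $\mathcal{A}_p$ scale recalled in the paragraph preceding Lemma 2: since $w \in \mathcal{A}_p$ there exists $1 < q < p$ with $w \in \mathcal{A}_q$. The argument above then supplies the weak-type $(q,q)$ bound for $M$ on the measure space $(\mathbb{R}^n, w\, dx)$, while the pointwise inequality $\|Mf\|_\infty \leq \|f\|_\infty$ gives the trivial strong-type $(\infty,\infty)$ estimate. Marcinkiewicz interpolation between these two endpoints produces the desired strong-type $(p,p)$ inequality.

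The main obstacle is the openness of $\mathcal{A}_p$, a deep property that rests on the reverse H\"older inequality for $\mathcal{A}_\infty$ weights; everything else in the sufficiency argument is a routine covering-plus-H\"older calculation, and the necessity direction is essentially a one-line test on a distinguished function. Since this theorem is quoted verbatim from Muckenhoupt's original paper, I expect the present article to state it as a black box rather than reproducing the proof.
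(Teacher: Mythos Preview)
Your expectation is correct: the paper states this theorem purely as a citation of Muckenhoupt's result and gives no proof whatsoever. Your sketch is the classical argument (test on $f = w^{-1/(p-1)}\chi_B$ for necessity; Vitali covering plus the $\mathcal{A}_p$ inequality for weak type, then openness of $\mathcal{A}_p$ and Marcinkiewicz for strong type) and is sound.
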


Given $1 < p \leq q < \infty$, we say that a weight $w \in \mathcal{A}_{p,q}$ if there exists $C> 0$ such that for every ball $B$
$$\left( \frac{1}{|B|} \int_{B} [w(x)]^{q} dx \right)^{1/q} \left( \frac{1}{|B|} \int_{B} [w(x)]^{-p'} dx \right)^{1/p'} \leq C < \infty.$$
For $p=1$, we say that a weight $w \in \mathcal{A}_{1,q}$ if there exists $C> 0$ such that for every ball $B$
$$\left( \frac{1}{|B|} \int_{B} [w(x)]^{q} dx \right)^{1/q} \leq C \, ess\inf_{x \in B} w(x).$$
When $p=q$, this definition is equivalent to $w^{p} \in \mathcal{A}_{p}$.
\begin{remark} \label{A_1 en A_pq}
From the inequality in (\ref{A1condequiv}) it follows that if a weight $w \in \mathcal{A}_1$, then $0 < ess\inf_{x \in B} w(x) < \infty$ for each ball $B$. Thus $w \in \mathcal{A}_1$ implies that $w^{\frac{1}{q}} \in \mathcal{A}_{p,q}$, for each $1 \leq p \leq q < \infty$.
\end{remark}
Given $0 < \alpha < n$, we define the fractional maximal operator $M_{\alpha}$ by
\[
M_{\alpha}f(x) = \sup_{B \ni x} \frac{1}{|B|^{1 - \frac{\alpha}{n}}} \int_{B} |f(y) | dy,
\]
where $f$ is a locally integrable function and the supremum is taken over all balls $B$ containing $x$

The fractional maximal operators satisfies the following weighted inequality.

\begin{theorem} \label{fract max pesada} $($\textit{Theorem 3 in \cite{Muck2}}$)$  If $0 < \alpha < n$, $1 < p < \frac{n}{\alpha}$, $\frac{1}{q} = \frac{1}{p} - \frac{\alpha}{n}$ and $w \in \mathcal{A}_{p,q}$, then
\[
\left(\int_{\mathbb{R}^{n}} [M_{\alpha}f(x)]^{q} w^{q}(x) dx\right)^{1/q} \leq C \left( \int_{\mathbb{R}^{n}} |f(x)|^{p} w^{p}(x) dx \right)^{1/p},
\]
for all $f \in L^{p}_{w^{p}}(\mathbb{R}^{n})$ . 
\end{theorem}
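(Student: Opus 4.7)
My approach would be the classical Marcinkiewicz interpolation proof of Muckenhoupt--Wheeden. The central ingredient is a weak-type $(p,q)$ inequality asserting that $M_{\alpha}$ is bounded from $L^{p}_{w^{p}}(\mathbb{R}^{n})$ to weak-$L^{q}_{w^{q}}(\mathbb{R}^{n})$; once this is in hand, a short self-improvement argument on the class $\mathcal{A}_{p,q}$ gives weak-type bounds at two distinct endpoints, and Marcinkiewicz interpolation produces the desired strong-type estimate.

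\textbf{Step 1: weak-type estimate.} Fix $\lambda>0$ and set $E_{\lambda}=\{x : M_{\alpha}f(x)>\lambda\}$. For each $x\in E_{\lambda}$, choose a ball $B_{x}\ni x$ with $|B_{x}|^{-(1-\alpha/n)}\int_{B_{x}}|f|>\lambda$, and apply a Vitali-type covering to extract a disjoint subfamily $\{B_{j}\}$ with $E_{\lambda}\subset\bigcup_{j}5B_{j}$. H\"older's inequality with exponents $p,p'$ yields
$$\lambda |B_{j}|^{1-\alpha/n} < \int_{B_{j}}|f| \leq \Bigl(\int_{B_{j}}|f|^{p}w^{p}\Bigr)^{1/p}\Bigl(\int_{B_{j}}w^{-p'}\Bigr)^{1/p'}.$$
The $\mathcal{A}_{p,q}$ condition (combined with the identity $1/p'+1/q=1-\alpha/n$, which follows from $1/q=1/p-\alpha/n$) bounds the second factor by $C\bigl(\int_{B_{j}}w^{q}\bigr)^{-1/q}|B_{j}|^{1-\alpha/n}$, so $\lambda^{q}w^{q}(B_{j})\leq C\bigl(\int_{B_{j}}|f|^{p}w^{p}\bigr)^{q/p}$. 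The doubling property of $w^{q}dx$ (Lemma~2 applied to $w^{q}\in\mathcal{A}_{\infty}$) turns $w^{q}(5B_{j})$ into $w^{q}(B_{j})$ up to a constant, disjointness together with the numerical inequality $\sum_{j}a_{j}^{q/p}\leq\bigl(\sum_{j}a_{j}\bigr)^{q/p}$ (valid since $q\geq p$) gives
$$w^{q}(E_{\lambda})\leq C\lambda^{-q}\|f\|_{L^{p}_{w^{p}}}^{q}.$$

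\textbf{Step 2: strong-type via interpolation.} A single weak-type bound does not suffice, so I would exploit the equivalence $w\in\mathcal{A}_{p,q}\Leftrightarrow w^{q}\in\mathcal{A}_{1+q/p'}$ together with the openness of the classes $\mathcal{A}_{t}$ (stated in the preliminaries). This produces exponents $p_{0}<p<p_{1}$ with associated $q_{i}$ defined by $1/q_{i}=1/p_{i}-\alpha/n$ such that $w\in\mathcal{A}_{p_{i},q_{i}}$ for $i=0,1$. Rerunning Step~1 at these endpoints gives the weak-type bounds $L^{p_{i}}_{w^{p_{i}}}\to L^{q_{i},\infty}_{w^{q_{i}}}$. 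A weighted Marcinkiewicz interpolation, applied via the level-set decomposition $f=f\chi_{\{|f|\leq c\lambda\}}+f\chi_{\{|f|>c\lambda\}}$ with a suitable choice of $c$, then yields the strong-type inequality at $(p,q)$.

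\textbf{Main obstacle.} The delicate point is the self-improvement step, because small perturbations of $p$ alter both the target exponent $q$ and the power on $w$ simultaneously, so the desired family of $\mathcal{A}_{p_{i},q_{i}}$ membership does not follow from openness of a single $\mathcal{A}_{t}$ class without care. I would handle this by passing to the scalar weight $W=w^{q}$, applying the reverse H\"older inequality for $\mathcal{A}_{\infty}$ weights to get $W\in\mathcal{A}_{1+q/p'-\varepsilon}$, and then translating this back to an $\mathcal{A}_{p_{0},q_{0}}$ statement for $w$ with $p_{0}<p$; the upper endpoint is treated symmetrically by invoking the dual equivalence $w\in\mathcal{A}_{p,q}\Leftrightarrow w^{-p'}\in\mathcal{A}_{1+p'/q}$. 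Once both endpoints are secured the interpolation step is routine, with the constants tracked through the weighted layer-cake formula.
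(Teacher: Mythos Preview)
The paper does not give its own proof of this statement: it is quoted verbatim as Theorem~3 of Muckenhoupt--Wheeden \cite{muck} and used as a black box, so there is no in-paper argument to compare against. Your sketch is essentially the classical route to that result and is sound in outline; the weak-type estimate in Step~1 is correct as written, and you rightly flag the self-improvement/interpolation step as the only delicate point.

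One refinement worth making explicit: in Step~2 the measures on both the source and target sides change with the endpoint $(p_i,q_i)$, so ``weighted Marcinkiewicz'' is not literally the standard theorem. The clean fix is the substitution $g=fw$, which turns the weak-type bound $M_\alpha:L^{p_i}_{w^{p_i}}\to L^{q_i,\infty}_{w^{q_i}}$ into a bound for the sublinear operator $T_\alpha g=w\cdot M_\alpha(g/w)$ from $L^{p_i}(dx)$ to $L^{q_i,\infty}(dx)$ with the \emph{fixed} Lebesgue measure; then ordinary off-diagonal Marcinkiewicz applies directly. With that adjustment your argument goes through, and it matches the strategy of the cited reference.
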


A weight $w$ satisfies the reverse H\"older inequality with exponent $s > 1$, denoted by $w \in RH_{s}$, if there exists $C> 0$ such that for every ball $B$,
$$\left(\frac{1}{|B|} \int_{B} [w(x)]^{s} dx \right)^{\frac{1}{s}} \leq C \frac{1}{|B|} \int_{B} w(x) dx;$$
the best possible constant is denoted by $[w]_{RH_s}$. We observe that if $w \in RH_s$, then by H\"older's inequality, $w \in RH_t$ for all $1 < t < s$, and
$[w]_{RH_t} \leq [w]_{RH_s}$. Moreover, if $w \in RH_{s}$, $s >1$, then $w \in RH_{s+ \epsilon}$ for some $\epsilon >0$. We denote by $r_w = \sup \{ r >1 : w \in RH_{r} \}$ \textit{the critical index of $w$ for the reverse H\"older condition.}

It is well known that a weight $w$ satisfies the condition $\mathcal{A}_{\infty}$ if and only if $w \in \mathcal{A}_p$ for some $p \geq 1$ (see corollary 7.3.4 in \cite{grafakos}). So $\mathcal{A}_{\infty} = \cup_{1 \leq p < \infty} \mathcal{A}_p$. Also, $w \in \mathcal{A}_{\infty}$ if and only if $w \in RH_{s}$ for some $s>1$ (see Theorem 7.3.3 in \cite{grafakos}). Thus $1 < r_w \leq +\infty$ for all $w \in \mathcal{A}_{\infty}$.

Other remarkable result about the reverse H\"older classes was discovered by Stromberg and Wheeden, they proved in \cite{wheeden} that $w \in RH_{s}$, $1 < s < + \infty$, if and only if  $w^{s} \in \mathcal{A}_{\infty}$.

Given a weight $w$, $0 < p < \infty$ and a measurable set $E$ we set $w^{p}(E) = \int_{E} [w(x)]^{p} dx$. The following result is a immediate consequence of the reverse H\"older condition.

\begin{lemma} \label{desig RH} For $0 < \alpha < n$, let $0 < p < \frac{n}{\alpha}$ and $\frac{1}{q} = \frac{1}{p} - \frac{\alpha}{n}$. If $w^{p} \in RH_{\frac{q}{p}}$ then
\[
[w^{p}(B)]^{-\frac{1}{p}} [w^{q}(B)]^{\frac{1}{q}} \leq [w^{p}]_{RH_{q/p}}^{1/p} |B|^{-\frac{\alpha}{n}},
\]
for each ball $B$ in $\mathbb{R}^{n}$.
\end{lemma}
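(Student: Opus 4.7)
The plan is to apply the reverse Hölder condition with exponent $s = q/p$ (noting $s > 1$ because $\alpha > 0$ forces $q > p$), and then simply track the exponents of $|B|$ using the relation $\frac{1}{q} = \frac{1}{p} - \frac{\alpha}{n}$.

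Concretely, I would first rewrite the hypothesis $w^{p} \in RH_{q/p}$ as
\[
\left( \frac{1}{|B|} \int_{B} [w(x)]^{q} \, dx \right)^{p/q} \leq [w^{p}]_{RH_{q/p}} \, \frac{1}{|B|} \int_{B} [w(x)]^{p} \, dx,
\]
i.e.
\[
\frac{[w^{q}(B)]^{p/q}}{|B|^{p/q}} \leq [w^{p}]_{RH_{q/p}} \, \frac{w^{p}(B)}{|B|}.
\]
Then I would multiply through by $|B|$ and use the identity $\frac{p}{q} - 1 = -\frac{\alpha p}{n}$ (which follows directly from $\frac{1}{q} = \frac{1}{p} - \frac{\alpha}{n}$) to obtain
\[
[w^{q}(B)]^{p/q} \leq [w^{p}]_{RH_{q/p}} \, w^{p}(B) \, |B|^{-\alpha p/n}.
\]
Finally, raising both sides to the power $\frac{1}{p}$ and dividing by $[w^{p}(B)]^{1/p}$ yields the desired inequality
\[
[w^{p}(B)]^{-1/p} [w^{q}(B)]^{1/q} \leq [w^{p}]_{RH_{q/p}}^{1/p} \, |B|^{-\alpha/n}.
\]

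There is essentially no obstacle here: the lemma is a bookkeeping exercise once one recognizes that $q/p$ is precisely the reverse Hölder exponent that matches the fractional scaling $|B|^{-\alpha/n}$. The only thing worth double-checking is the algebraic identity $\frac{p}{q} - 1 = -\frac{\alpha p}{n}$, which is immediate from the definition of $q$.
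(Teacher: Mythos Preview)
Your proof is correct and is exactly what the paper has in mind: the paper does not even include a proof of this lemma, stating only that it ``is an immediate consequence of the reverse H\"older condition,'' and your argument is precisely the two-line computation that makes this immediate consequence explicit.
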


\subsection{Weighted Hardy Spaces} Topologize $\mathcal{S}(\mathbb{R}^{n})$ by the collection of semi-norms $\| \cdot \|_{\alpha, \beta}$, with $\alpha$ and $\beta$ multi-indices, given by
$$\| \varphi \|_{\alpha, \beta} = \sup_{x \in \mathbb{R}^{n}} |x^{\alpha} \partial^{\beta}\varphi(x)|.$$
For each $N \in \mathbb{N}$, we set $\mathcal{S}_{N}=\left\{  \varphi\in \mathcal{S}(\mathbb{R}^{n}): \| \varphi \|_{\alpha, \beta} \leq 1, |\alpha|, |\beta| \leq N \right\}$. Let $f \in \mathcal{S}'(\mathbb{R}^{n})$, we denote by $\mathcal{M}_{N}$ the grand maximal
operator given by
\[
\mathcal{M}_{N}f(x)=\sup\limits_{t>0}\sup\limits_{\varphi\in\mathcal{S}_{N}
}\left\vert \left(  t^{-n}\varphi(t^{-1} \cdot)\ast f\right)  \left(  x\right)
\right\vert.
\]
Given a weight $w \in \mathcal{A}_{\infty}$ and $p > 0$, the weighted Hardy space $H^{p}_w(\mathbb{R}^{n})$ consists of all tempered distributions $f$ such that
\[
\| f \|_{H^{p}_w(\mathbb{R}^{n})} = \| \mathcal{M}_{N}f \|_{L^{p}_w(\mathbb{R}^{n})} = \left( \int_{\mathbb{R}^{n}}  [\mathcal{M}_{N}f(x)]^{p} w(x) dx \right)^{1/p} < \infty.
\]

Let $\phi \in \mathcal{S}(\mathbb{R}^{n})$ be a function such that $\int \phi(x) dx \neq 0$. For $f \in \mathcal{S}'(\mathbb{R}^{n})$, we define the maximal function $M_{\phi}f$ by
$$M_{\phi}f(x)= \sup_{t>0} \left\vert \left(  t^{-n}\phi(t^{-1} \, \cdot)\ast f\right)  \left(  x\right)\right\vert.$$
For $N$ sufficiently large, we have  $\| M_{\phi}f \|_{L^{p}_w} \simeq \| \mathcal{M}_{N}f \|_{L^{p}_w}$, (see \cite{tor}).

\

In the sequel we consider the following set
\[
\widehat{\mathcal{D}}_{0} = \left\{ \phi \in \mathcal{S}(\mathbb{R}^{n}) : \widehat{\phi} \in C_{c}^{\infty}(\mathbb{R}^{n}) \,\,
\textit{and} \,\,\, 0 \notin \supp ( \widehat{\phi} \, ) \right\}.
\]
The following theorem is crucial to get the main results.

\begin{theorem} \label{dense set} $($Theorem 1 in \cite{tor} pp. $103$$)$ Let $w$ be a doubling weight on $\mathbb{R}^{n}$. Then $\widehat{\mathcal{D}}_{0}$ is dense in $H^{p}_{w}(\mathbb{R}^{n})$, $0 < p  < \infty$.
\end{theorem}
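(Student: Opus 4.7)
My plan is to approximate $f \in H^p_w$ by band-limited smoothings. Pick $\psi \in \mathcal{S}(\mathbb{R}^n)$ with $\hat\psi \in C_c^\infty(\mathbb{R}^n)$, $\hat\psi(0) = 1$, and $\supp(\hat\psi) \subset B(0,1)$. Set $\psi^t(x) = t^{-n}\psi(x/t)$, so that $\widehat{\psi^t}(\xi) = \hat\psi(t\xi)$ is $C_c^\infty$ with support in $B(0,1/t)$ and $\widehat{\psi^t}(\xi) \to 1$ pointwise as $t \to 0^+$. For $f \in H^p_w \subset \mathcal{S}'(\mathbb{R}^n)$, let $f_t := \psi^t * f$. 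Then $\widehat{f_t} = \hat\psi(t\,\cdot) \hat f$ is a compactly supported distribution, so $f_t$ is $C^\infty$ and its Fourier transform has compact support inside $B(0,1/t)$.

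Next I would show $f_t \to f$ in $H^p_w$ as $t \to 0^+$. Using the equivalence $\|g\|_{H^p_w} \simeq \|M_\phi g\|_{L^p_w}$ recalled after Theorem 7 of the excerpt, valid for $\phi \in \mathcal{S}$ with $\int \phi \ne 0$, the task reduces to $\|M_\phi(f - f_t)\|_{L^p_w} \to 0$. A uniform pointwise domination $M_\phi(f - f_t)(x) \le C\, M_\phi f(x)$ is available because $\phi^s * \psi^t$ is, up to a bounded multiplicative factor independent of $s$ and $t$, another admissible convolution kernel for the maximal function. Pointwise convergence $M_\phi(f - f_t)(x) \to 0$ a.e.\ follows from standard approximation-to-the-identity arguments, using $\int \psi = \hat\psi(0) = 1$. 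Dominated convergence in $L^p_w$, with $M_\phi f \in L^p_w$ as dominant, then closes the argument.

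The main obstacle is that $f_t$ is smooth with compactly supported Fourier transform yet need not belong to $\mathcal{S}$: band-limited functions can decay only like $|x|^{-1}$, as with $\sin x / x$, so in general $f_t \notin \widehat{\mathcal{D}}_0$. To circumvent this I would insert a preliminary density step, showing that $\mathcal{S}(\mathbb{R}^n) \cap H^p_w$ is dense in $H^p_w$. A natural candidate is $g_{\epsilon,R} := \chi(\cdot/R)\, (\phi^\epsilon * f) \in C_c^\infty \subset \mathcal{S}$ for a fixed $\chi \in C_c^\infty$ equal to $1$ on $B(0,1)$, with $\epsilon \to 0$ and $R \to \infty$ coupled so that $g_{\epsilon,R} \to f$ in $H^p_w$; the convergence rests on pointwise maximal function domination by $M_\phi f$ and on the doubling property of $w$ (Lemma 2 in the excerpt) to control the spatial tails. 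Once $g \in \mathcal{S} \cap H^p_w$, the convolution $\psi^t * g$ lies in $\mathcal{S}$ automatically, since $\mathcal{S}$ is closed under convolution, and has compact Fourier support, so $\psi^t * g \in \widehat{\mathcal{D}}_0$. Chaining the two approximations yields the density of $\widehat{\mathcal{D}}_0$; the intermediate density of $\mathcal{S} \cap H^p_w$ is the technical heart of the argument.
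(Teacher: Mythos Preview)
The paper does not supply its own proof of this theorem: it is quoted as Theorem~1 on p.~103 of Str\"omberg--Torchinsky \cite{tor} and used as a black box thereafter. So there is no in-paper argument to compare against; one can only measure your sketch against the original source.

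Judging your outline on its own terms, the first half---that $\psi^t * f \to f$ in $H^p_w$ via domination by the grand maximal function and dominated convergence---is the right idea (the domination should be by $\mathcal{M}_N f$ rather than $M_\phi f$, since $\phi^s*\psi^t$ is, after rescaling, a bounded family in $\mathcal{S}_N$; but this is harmless because $\|\mathcal{M}_N f\|_{L^p_w}\simeq\|M_\phi f\|_{L^p_w}$). The genuine gap is in your ``preliminary density step.'' For $0<p\le 1$, multiplication by a smooth spatial cutoff is \emph{not} a bounded operation on $H^p_w$: it destroys the vanishing-moment structure that membership in $H^p$ encodes. Concretely, the tail piece $h_R:=(1-\chi(\cdot/R))(\phi^\epsilon*f)$ satisfies $|h_R|\le M_\phi f\cdot\chi_{\{|x|\ge R\}}$ pointwise, but $M_\phi h_R(x)$ at a point with $|x|\ll R$ is governed by averages over balls of radius $\gtrsim R$ reaching into the support of $h_R$, and there is no uniform-in-$R$ control of these by $M_\phi f(x)$. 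A clean model: if $f$ is a single atom and the cutoff bisects its support, the truncated piece has nonzero mean and its $H^p$ norm is not small---indeed not obviously finite. The doubling of $w$ does nothing to repair this; the obstruction is on the Hardy-space side, not the weight side. So the asserted domination $M_\phi(f-g_{\epsilon,R})\le C\,M_\phi f$, on which your dominated-convergence argument hinges, is unjustified and in general false.

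The proof in \cite{tor} avoids spatial truncation entirely: it works through the real-variable maximal machinery built up in that book together with frequency-side approximations, so that cancellation is never broken. If you want to salvage your outline, replace the cutoff step by a purely frequency-localized approximation---for instance a Calder\'on-type reproducing formula with building blocks already in $\widehat{\mathcal{D}}_0$---for which the needed maximal-function domination genuinely holds.
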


\subsubsection{Weighted atom Theory} Let $w \in \mathcal{A}_{\infty}$ with critical index $\widetilde{q}_w$ and critical index $r_w$ for reverse H\"older condition. Let $0 < p \leq 1$, $\max \{ 1, p(\frac{r_w}{r_w -1}) \} < p_0 \leq +\infty$,
and $d \in \mathbb{Z}$ such that $d \geq \lfloor n(\frac{\widetilde{q}_w}{p} -1) \rfloor$, we say that a function $a(\cdot)$ is a $w - (p, p_0, d)$ atom centered in $x_0 \in \mathbb{R}^{n}$ if

\

$(a1)$ $a \in L^{p_0}(\mathbb{R}^{n})$ with support in the ball  $B= B(x_0, r)$.

\

$(a2)$ $\| a \|_{L^{p_0}(\mathbb{R}^{n})} \leq |B|^{\frac{1}{p_0}}w(B)^{-\frac{1}{p}}$.

\

$(a3)$ $\int x^{\alpha} a(x) dx  =0$ for all multi-index $\alpha$ such that $| \alpha | \leq d$.

\

We observe that the condition $\max \{ 1, p(\frac{r_w}{r_w -1}) \} < p_0 < +\infty$ implies that $w \in RH_{(\frac{p_0}{p})'}$. If $r_w = +\infty$, then $w \in RH_{t}$ for each $1 < t < +\infty$. So, if $r_w = + \infty$ and since $\displaystyle{\lim_{t \rightarrow +\infty}} \frac{t}{t-1} = 1$ we put $\frac{r_w}{r_w - 1} =1$. For example, if $w \equiv 1$, then $\widetilde{q}_w =1$ and $r_w = + \infty$ and our definition of atom in this case coincide with the definition of atom in the classical Hardy spaces.

\begin{lemma} \label{estim atomo} Let $w \in \mathcal{A}_{\infty}$ with critical index $\widetilde{q}_{w}$ and critical index $r_w$. If $a(\cdot)$ is a $w - (p, p_0, d)$ atom, then $a(\cdot) \in H^{p}_{w}(\mathbb{R}^{n})$. Moreover, there exists a positive constant $C$ independent of the atom $a$ such that $\| a \|_{ H^{p}_{w}} \leq C$.
\end{lemma}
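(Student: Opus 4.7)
The plan is to use the equivalence $\|a\|_{H^p_w} \simeq \|M_\phi a\|_{L^p_w}$ for a fixed $\phi \in \mathcal{S}(\mathbb{R}^n)$ with $\int \phi \neq 0$, and to estimate $\int [M_\phi a(x)]^p w(x)\,dx$ by splitting the integral into the local piece on $2B$ (where $B=B(x_0,r)$ is the ball containing $\supp a$) and the far piece on $(2B)^c$. Both pieces will be shown to be bounded by a constant depending only on $w$, $p$, $p_0$, $d$, $n$.

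For the local piece, I would use the pointwise bound $M_\phi a(x) \lesssim M(a)(x)$ together with H\"older's inequality of exponent $p_0/p>1$:
\[
\int_{2B} [M_\phi a]^{p} w \;\leq\; \Bigl(\int_{2B} [M(a)]^{p_0}\Bigr)^{p/p_0}\Bigl(\int_{2B} w^{(p_0/p)'}\Bigr)^{1/(p_0/p)'}.
\]
The first factor is bounded via the $L^{p_0}$-boundedness of $M$ and the size condition $(a2)$ by $|B|^{p/p_0} w(B)^{-1}$. The hypothesis $p_0> p\,r_w/(r_w-1)$ gives $(p_0/p)'<r_w$, hence $w\in RH_{(p_0/p)'}$; together with doubling this bounds the second factor by $|B|^{-p/p_0}w(B)$. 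Multiplying yields the constant $C$.

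For the far piece, I would use the vanishing-moment condition $(a3)$ to subtract, inside the convolution defining $M_\phi a(x)$, the Taylor polynomial of $\phi_t(x-\cdot)$ of order $d$ at $x_0$. Combining the Schwartz-class derivative bounds of $\phi_t$ with the sup over $t>0$, one obtains, for $x\notin 2B$,
\[
M_\phi a(x) \;\lesssim\; \frac{r^{n+d+1}}{|x-x_0|^{n+d+1}}\,\|a\|_{L^1} \;\lesssim\; \frac{r^{n+d+1}}{|x-x_0|^{n+d+1}}\, w(B)^{-1/p},
\]
where the last step uses H\"older in $L^{p_0}$ and $(a2)$. Splitting $(2B)^c$ into the annuli $2^{k+1}B\setminus 2^{k}B$, $k\geq 1$, and using that $w\in\mathcal{A}_q$ for some $q$ arbitrarily close to $\widetilde{q}_w$ (so $w(2^{k+1}B)\lesssim 2^{knq}w(B)$), one reduces to showing convergence of $\sum_{k\geq 1} 2^{k[nq-p(n+d+1)]}$.

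The key point, and the only subtle step, is matching the moment order $d$ with the summability condition: one needs $d+1>n(q/p-1)$ for some admissible $q>\widetilde{q}_w$. This is exactly what the hypothesis $d\geq \lfloor n(\widetilde{q}_w/p-1)\rfloor$ delivers, since $\lfloor s\rfloor>s-1$ forces $d+1>n(\widetilde{q}_w/p-1)$, which, by openness of the $\mathcal{A}_q$ condition, allows the choice of $q$ slightly larger than $\widetilde{q}_w$ still satisfying the strict inequality. The resulting geometric sum cancels the factor $w(B)^{-1}$ and gives the desired uniform bound $\|a\|_{H^p_w}\leq c$.
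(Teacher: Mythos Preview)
Your proof is correct and follows essentially the same approach as the paper: split $\int [M_\phi a]^p w$ into a local piece (handled via $M_\phi a\lesssim Ma$, H\"older with exponent $p_0/p$, the $L^{p_0}$-boundedness of $M$, and $w\in RH_{(p_0/p)'}$) and a far piece (handled via the Taylor remainder and the moment condition, using $(n+d+1)p/n>\widetilde q_w$). The only cosmetic difference is that the paper rewrites the far-piece pointwise bound as $w(B)^{-1/p}[M(\chi_B)(x)]^{(n+d+1)/n}$ and then invokes the weighted maximal theorem (Theorem~3) with $w\in\mathcal{A}_{(n+d+1)p/n}$, whereas you perform the equivalent direct dyadic-annuli summation; both arguments exploit the same strict inequality $(n+d+1)p/n>\widetilde q_w$.
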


\begin{proof} Let $\phi \in \mathcal{S}(\mathbb{R}^{n})$ with $\int \phi(x) dx \neq 0$. Since $\phi$ has a radial majorant that is a non-incresing, bounded and integrable function, we have that
\[
M_{\phi}a(x) \leq c Ma(x), \,\,\,\,\,\, for \,\, all \,\,\,\, x \in \mathbb{R}^{n}.
\]
In view of the moment condition of $a$ we have
\[
(a \ast \phi_t )(x) = \int [\phi_{t}(x-y) - q_{x,t}(y)] a(y) dy, \,\,\,\, if \,\,\, x \in \mathbb{R}^{n} \setminus B(x_0, 4r)
\]
where $q_{x,t}$ is the degree $d$ Taylor polynomial of the function $y \rightarrow \phi_{t}(x-y)$ expanded around $x_0$. By the standard estimate of the remainded term of the Taylor expansion, the condition $(a2)$ and H\"older's inequality, we obtain that
 \begin{eqnarray*}
M_{\phi}a(x) &\leq& c \| a \|_1 r^{d+1} |x-x_0|^{-n-d-1} \\
&\leq& c w(B)^{-1/p} r^{n+d+1} |x-x_0|^{-n-d-1} \\ 
&\leq& c w(B)^{-1/p} [M(\chi_{B})(x)]^{\frac{n+d+1}{n}}, \,\,\, if \,\, x \in \mathbb{R}^{n} \setminus B(x_0, 4r).
\end{eqnarray*}
Therefore
\[
\int [M_{\phi}a(x)]^{p} w(x) dx \leq c \int \left( \chi_{B(x_0, 4r)} [Ma(x)]^{p} +  \frac{ [M(\chi_{B})(x)]^{\frac{(n+d+1)p}{n}}}{w(B)} \right) w(x) dx.
\]
On the right-side of this inequality, we apply  H\"older's inequality with $p_0 /p$ and use that $w \in RH_{(\frac{p_0}{p})'}$ ($p_0 > p(\frac{r_w}{r_w -1})$) and Lemma \ref{doblante} for the first term, for the second term we have that $\frac{(n+d+1)p}{n} > \widetilde{q}_w$, so $w \in \mathcal{A}_{\frac{(n+d+1)p}{n}}$, then to invoke Theorem \ref{max pesada} we obtain
\[
\| M_{\phi}a\|_{L^{p}_{w}}^{p} = \int_{\mathbb{R}^{n}} [M_{\phi}a(x)]^{p} w(x) dx \leq C,
\]
where the constant $C$ is independent of the $w - (p, p_0, d)$ atom $a$. Thus $a \in H^{p}_{w}(\mathbb{R}^{n})$.
\end{proof}

\begin{theorem} \label{serie atomica} Let $f \in \widehat{\mathcal{D}}_{0}$, and $0 < p \leq 1$. If $w \in \mathcal{A}_{\infty}$, then there exist a sequence of $w - (p, p_0, d)$ atoms $\{ a_j \}$ and a sequence of scalars $\{ \lambda_j \}$ with $\sum_{j} |\lambda_j |^{p} \leq c \| f \|_{H^{p}_{w}}^{p}$ such that $f = \sum_{j} \lambda_j a_j$, where the convergence is both in $L^{s}(\mathbb{R}^{n})$ and pointwise, for each $1 < s <\infty$.
\end{theorem}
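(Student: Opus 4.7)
The natural route is the Calderón–Zygmund/Whitney construction applied to the level sets of the grand maximal function, adapted to the weighted setting (as in Grafakos, Chapter 2, or García-Cuerva and Rubio de Francia). The crucial point is that any $f\in\widehat{\mathcal{D}}_0$ is a Schwartz function whose Fourier transform is compactly supported, so $f$ together with $\mathcal{M}_N f$ belongs to $L^s(\mathbb{R}^n)$ for every $s\in(0,\infty]$; this is precisely what powers the $L^s$ convergence of the resulting series.

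For each $k\in\mathbb{Z}$ set $\Omega_k=\{x\colon\mathcal{M}_N f(x)>2^k\}$. Since $\mathcal{M}_N f$ is lower semicontinuous, $\Omega_k$ is open and, by Schwartz decay, has finite Lebesgue measure. Take a Whitney-type covering $\Omega_k=\bigcup_j B_j^k$ by balls of bounded overlap whose radii are comparable to the distance to $\Omega_k^c$, together with a smooth partition of unity $\{\eta_j^k\}$ subordinate to $\{2B_j^k\}$ with $\sum_j\eta_j^k=\chi_{\Omega_k}$. For each $(k,j)$ choose the polynomial $P_j^k$ of degree $\leq d$ that orthogonally projects $f$ onto $\mathcal{P}_d$ with respect to the weighted inner product $\langle u,v\rangle_{j,k}=(\int\eta_j^k)^{-1}\int uv\,\eta_j^k$. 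Define the good and bad parts $g^k=f\chi_{\Omega_k^c}+\sum_j P_j^k\eta_j^k$ and $b_j^k=(f-P_j^k)\eta_j^k$. The classical arguments give $\|g^k\|_{L^\infty}\lesssim 2^k$, and each $b_j^k$ is supported in $2B_j^k$ with $\int x^\alpha b_j^k\,dx=0$ for $|\alpha|\leq d$. Telescoping yields $g^{k+1}-g^k=\sum_j A_j^k$ where each $A_j^k$ is supported in a fixed dilate $cB_j^k$, has vanishing moments up to order $d$, and $\|A_j^k\|_{L^\infty}\lesssim 2^k$. Setting $\lambda_j^k=C\,2^k\,w(B_j^k)^{1/p}$ and $a_j^k=A_j^k/\lambda_j^k$, the $L^\infty$ bound immediately gives $\|a_j^k\|_{L^{p_0}}\leq |cB_j^k|^{1/p_0}w(cB_j^k)^{-1/p}$ (up to absorbing doubling constants using Lemma 2), so each $a_j^k$ is a $w$–$(p,p_0,d)$ atom.

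The norm control is standard: using the bounded overlap of $\{B_j^k\}$, the doubling of $w(x)\,dx$, and the layer-cake formula,
\[
\sum_{k,j}|\lambda_j^k|^p \;\lesssim\; \sum_k 2^{kp}\sum_j w(B_j^k)\;\lesssim\;\sum_k 2^{kp} w(\Omega_k)\;\lesssim\;\int_{\mathbb{R}^n}[\mathcal{M}_N f]^p w\,dx\;=\;\|f\|_{H^p_w}^p.
\]

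The most delicate point, and the one that distinguishes this statement from Bownik-type pathologies, is the $L^s$ convergence. Writing $f=g^{k_0}+\sum_{k=k_0}^{K-1}(g^{k+1}-g^k)+(f-g^K)$, it suffices to prove $\|f-g^K\|_{L^s}\to 0$ as $K\to+\infty$ and $\|g^{k_0}\|_{L^s}\to 0$ as $k_0\to-\infty$. For the first, the pointwise bound $|f-g^K|\leq |f|\chi_{\Omega_K}+C\,2^K\chi_{\Omega_K}$ reduces matters to showing $\int_{\Omega_K}|f|^s\to 0$ (which follows from $|\Omega_K|\to 0$ and $|f|^s\in L^1$) and $2^{Ks}|\Omega_K|\to 0$; the latter holds because $\mathcal{M}_N f\in L^r$ for every $r>s$, so Chebyshev gives $2^{Ks}|\Omega_K|\leq 2^{K(s-r)}\|\mathcal{M}_N f\|_{L^r}^r\to 0$. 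For the second, $|g^{k_0}|\leq |f|\chi_{\Omega_{k_0}^c}+C\,2^{k_0}\chi_{\Omega_{k_0}}\leq |f|+C\,\mathcal{M}_N f$, and the two summands tend to $0$ pointwise as $k_0\to-\infty$, so dominated convergence (with dominants in $L^s$) closes the argument. The main obstacle is precisely this step: without the a priori integrability of $\mathcal{M}_N f$ in every $L^s$, which is granted by $f\in\widehat{\mathcal{D}}_0$, one could only obtain distributional or quasi-norm convergence, not the uniform $L^s$ convergence for every $s>1$ that is needed to bypass Bownik's example in applications.
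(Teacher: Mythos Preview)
Your argument is correct and follows the same Calder\'on--Zygmund/Whitney construction as the paper: level sets $\Omega_k$ of the grand maximal function, a smooth partition of unity, polynomial corrections to enforce moments, telescoping $g^{k+1}-g^k=\sum_j A_j^k$, and the normalization $\lambda_j^k=C\,2^k w(B_j^k)^{1/p}$.

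The only substantive difference is in how the $L^s$ convergence is established. The paper proves \emph{absolute} $L^s$ convergence in one stroke by the pointwise bound $\sum_{j,k}|A_j^k(x)|\lesssim\sum_k 2^k\chi_{\Omega_k\setminus\Omega_{k+1}}(x)\lesssim\mathcal{M}_N f(x)$, and then integrates. You instead control the two tails of the telescope, showing $\|g^{k_0}\|_{L^s}\to 0$ and $\|f-g^K\|_{L^s}\to 0$ via dominated convergence and a Chebyshev argument using $\mathcal{M}_N f\in L^r$ for some $r>s$. Both routes exploit exactly the same fact---that $f\in\widehat{\mathcal{D}}_0$ forces $\mathcal{M}_N f\in L^s$ for all $s$---and both are standard. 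The paper's pointwise-majorant argument is a bit more economical and yields unconditional convergence of the full double sum immediately; your tail argument, as written, gives convergence of the $k$-series and you should remark (it is implicit) that for each fixed $k$ the inner sum $\sum_j A_j^k$ converges absolutely in $L^s$ by the bounded-overlap and $L^\infty$ bounds, so the full double series converges as well.
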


\begin{proof} Given $f \in \widehat{\mathcal{D}}_{0}$, let $\mathcal{O}_j = \{ x : \mathcal{M}_N f(x) > 2^{j} \}$ and let 
$\mathcal{F}_j = \{ Q_{k}^{j} \}_{k}$ be the Whitney decomposition associated to $\mathcal{O}_j$ such that 
$\bigcup_{k} Q_{k}^{j \ast} = \mathcal{O}_j$, where $Q_{k}^{j \ast}$ is the cube with the same center as $Q_{k}^{j}$ but with side length
$(1+\epsilon)$ times that of $Q_{k}^{j}$ with $0 < \epsilon < \frac{1}{4}$ suitably chosen (see \cite[Appendix J]{grafakos}). Fixed $j \in \mathbb{Z}$, we define the following set
$$E^{j} = \{ (i,k) \in \mathbb{Z} \times \mathbb{Z} : Q_{i}^{j+1 \ast} \cap Q_{k}^{j \ast} \neq \emptyset \}$$
and let $E_{k}^{j} = \{ i : (i,k) \in E^{j} \}$ and $E_{i}^{j} = \{ k : (i,k) \in E^{j} \}$.
Following the proof of \cite[Theorem 2, pp. 107-109]{stein}, we have a sequence of functions $A_{k}^{j}$ such that

\qquad

(i) $supp(A_{k}^{j}) \subset Q_{k}^{j \ast} \cup \bigcup_{i \in E_{k}^{j}} Q_{i}^{j+1 \ast}$ and
$|A_{k}^{j}(x)| \leq c 2^{j}$ for all $k,j \in \mathbb{Z}$.

\qquad

(ii) $\int x^{\alpha} A_{k}^{j}(x) dx =0$ for all $\alpha$ with $|\alpha| \leq d$ and all $k,j \in \mathbb{Z}$.

\qquad

(iii) The sum $\sum_{j,k} A_{k}^{j}$ converges to $f$ in the sense of distributions. \\
From (i) we obtain
$$\sum_{k} |A_{k}^{j}| \leq c 2^{j} \left( \sum_{k} \chi_{Q_{k}^{j \ast}} + \sum_{k} \chi_{\bigcup_{i \in E^{j}_{k}} Q_{i}^{j+1 \ast}} \right)$$
following the proof of Theorem 3.1 in \cite{rocha} we obtain that
$$\leq c 2^{j} \left( \chi_{\mathcal{O}_j} + \sum_{k} \sum_{i \in E^{j}_{k}} \chi_{Q_{i}^{j+1 \ast}} \right)
= c 2^{j} \left( \chi_{\mathcal{O}_j} + \sum_{i} \sum_{k \in E^{j}_{i}} \chi_{Q_{i}^{j+1 \ast}} \right)$$
$$\leq c 2^{j} \left( \chi_{\mathcal{O}_j} + 84^{n} \sum_{i} \chi_{Q_{i}^{j+1 \ast}} \right)
\leq c 2^{j} \left( \chi_{\mathcal{O}_j} + \chi_{\mathcal{O}_{j+1}} \right) \leq c 2^{j} \chi_{\mathcal{O}_j},$$
by Lemma 2.4 in \cite{rocha} we have that
$$\sum_{j,k} |A_{k}^{j}(x)| \leq c \sum_{j} 2^{j} \chi_{\mathcal{O}_j \setminus \mathcal{O}_{j+1}}(x), \,\,\,\, a.e. x \in \mathbb{R}^{n}.$$
Thus, for $1 < s < \infty$ fixed
\begin{equation} 
\int \left( \sum_{j,k} |A_{k}^{j}(x)| \right)^{s} dx \leq c \sum_j \int_{\mathcal{O}_j \setminus \mathcal{O}_{j+1}} 2^{js} dx \leq c \sum_j \int_{\mathcal{O}_j \setminus \mathcal{O}_{j+1}} (\mathcal{M}_N f(x))^{s} dx \label{sum}
\end{equation}
$$\leq c \int_{\mathbb{R}^{n}} (\mathcal{M}_N f(x))^{s}  dx< \infty,$$
since $f \in \hat{\mathcal{D}}_{0} \subset L^{s}(\mathbb{R}^{n})$. From (\ref{sum}) and (iii) we obtain that the sum $\sum_{j,k} A_{k}^{j}$ converges to $f$ in $L^{s}(\mathbb{R}^{n})$, and $\sum_{j,k} A_{k}^{j}(x) =f(x)$ a.e.$x \in \mathbb{R}^{n}$, for each $1 < s < \infty$.

Now, we set $a_{j,k} = \lambda^{-1}_{j,k} A_{k}^{j}$ with $\lambda_{j,k} = c 2^{j} w(B_{k}^{j})^{1/p}$, where $B_{k}^{j}$ is the smallest ball containing $Q_{k}^{j \ast}$ as well as all the $Q_{i}^{j+1 \ast}$ that intersect $Q_{k}^{j \ast}$. Then we have a sequence $\{ a_{j,k} \}$ of $w - (p, p_0, d)$ atoms and a sequence of scalars $\{ \lambda_{j,k} \}$ such that the sum $\sum_{j,k} \lambda_{j,k} a_{j,k}$ converges to $f$ in $L^{s}(\mathbb{R}^{n})$ and a.e.$x \in \mathbb{R}^{n}$. On the other hand there exists an universal constant $c_1$ such that $B_{k}^{j} \subset c_1 Q_{k}^{j}$. Then, by Lemma \ref{doblante}
\[
\sum_{j,k} |\lambda_{j,k} |^{p} \lesssim  \sum_{j,k} 2^{jp} w(B_{k}^{j}) \lesssim  \sum_{j,k} 2^{jp} w(c_1 Q_{k}^{j}) \lesssim c_1^{np} \sum_{j,k} 2^{jp} w(Q_{k}^{j}) \leq c \sum_{j} 2^{jp} w(\mathcal{O}_j ).
\]
If $x \in \mathbb{R}^{n}$, there exists a unique $j_0 \in \mathbb{Z}$ such that $2^{j_0 p} <  \mathcal{M}_N f(x)^{p} \leq 2^{(j_0 +1)p}$. So
\[
\sum_{j} 2^{jp} \chi_{\mathcal{O}_j }(x) \leq \sum_{j \leq j_0} 2^{j p} =\frac{ 2^{(j_0+1)p} }{2^{p}-1} \leq\frac{ 2^{p}}{2^{p}-1} \mathcal{M}_N f(x)^{p}.
\]
From this it follows that
\[
\sum_{j,k} |\lambda_{j,k} |^{p} \leq c \sum_{j} 2^{jp} w(\mathcal{O}_j ) \leq c \frac{ 2^{p}}{2^{p}-1} \| \mathcal{M}_N f \|_{L^{p}_{w}}^{p} = c \frac{ 2^{p}}{2^{p}-1} \| f \|_{H^{p}_{w}}^{p},
\]
which proves  the theorem.
\end{proof}

\begin{theorem} \label{estim Hpw-Lpw}
Let $T$ be a bounded linear operator from $L^{p_0}(\mathbb{R}^{n})$ into $L^{p_0}(\mathbb{R}^{n})$ for some $1 < p_0 < +\infty$. If $w \in \mathcal{A}_{\infty}$ with critical index $r_w$,
$0 < p \leq 1 < \frac{r_w -1}{r_w} p_0$ or  $0 < p < \frac{r_w -1}{r_w} p_0 \leq 1$, then $T$ can be extended to an $H^{p}_{w}(\mathbb{R}^{n}) - L^{p}_{w}(\mathbb{R}^{n})$ bounded linear operator if and only if  $T$  is uniformly bounded into $L^{p}_{w}$ norm on all $w-(p, p_0, d)$ atom $a$.
\end{theorem}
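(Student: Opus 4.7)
The plan is to obtain the necessity direction directly from Lemma 8, and to establish the sufficiency direction by combining the atomic decomposition of Theorem 9 with the $L^{p_0}$-convergence built into that decomposition in order to justify the $p$-subadditivity estimate appropriate for $0<p\le 1$.

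For the necessity, I would observe that the hypotheses on $p$ and $p_0$ are exactly those demanded in the definition of a $w-(p,p_0,d)$ atom, so Lemma 8 applies and gives a constant $c>0$, independent of the atom, with $\|a\|_{H^p_w}\le c$ for every such atom. If $\widetilde T:H^p_w\to L^p_w$ is a bounded extension of $T$, then $\|Ta\|_{L^p_w}=\|\widetilde T a\|_{L^p_w}\le \|\widetilde T\|_{H^p_w\to L^p_w}\|a\|_{H^p_w}\le c\,\|\widetilde T\|_{H^p_w\to L^p_w}$, the uniform bound claimed.

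For the sufficiency, set $C_0:=\sup_a\|Ta\|_{L^p_w}<\infty$, the supremum being over $w-(p,p_0,d)$ atoms. Given $f\in\widehat{\mathcal{D}}_0$, apply Theorem 9 to produce atoms $\{a_j\}$ and scalars $\{\lambda_j\}$ with $\sum_j|\lambda_j|^p\le c\|f\|_{H^p_w}^p$ such that the partial sums $S_N=\sum_{j\le N}\lambda_j a_j$ converge to $f$ in $L^s$ for every $1<s<\infty$. In particular $S_N\to f$ in $L^{p_0}$, whence $TS_N\to Tf$ in $L^{p_0}$ by the hypothesis on $T$, and along some subsequence $TS_{N_k}\to Tf$ almost everywhere. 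Since $(\sum|z_j|)^p\le\sum|z_j|^p$ for $0<p\le 1$,
\[
|TS_{N_k}(x)|^p\le\sum_{j\le N_k}|\lambda_j|^p|Ta_j(x)|^p\le \sum_{j}|\lambda_j|^p|Ta_j(x)|^p,
\]
so letting $k\to\infty$ yields $|Tf(x)|^p\le\sum_j|\lambda_j|^p|Ta_j(x)|^p$ a.e. Integrating against $w$ and using the uniform atomic estimate,
\[
\|Tf\|_{L^p_w}^p\le\sum_j|\lambda_j|^p\|Ta_j\|_{L^p_w}^p\le C_0^p\sum_j|\lambda_j|^p\le cC_0^p\|f\|_{H^p_w}^p.
\]

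Finally, to produce the extension on all of $H^p_w$, I would invoke the density of $\widehat{\mathcal{D}}_0$ in $H^p_w$ guaranteed by Theorem 7 (every $w\in\mathcal{A}_\infty$ is doubling). For $f\in H^p_w$ and $\{f_n\}\subset\widehat{\mathcal{D}}_0$ with $f_n\to f$ in $H^p_w$, the estimate just derived, applied to $f_n-f_m\in\widehat{\mathcal{D}}_0$, shows that $\{Tf_n\}$ is Cauchy in the quasi-Banach space $L^p_w$; its limit $\widetilde Tf$ is independent of the chosen sequence, defines a linear operator, and satisfies $\|\widetilde Tf\|_{L^p_w}\le c^{1/p}C_0\|f\|_{H^p_w}$. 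The step I expect to require the most care is the almost everywhere passage from $|TS_{N_k}|^p\le \sum_j|\lambda_j|^p|Ta_j|^p$ to the same bound for $|Tf|^p$: this rests on extracting a subsequence along which $TS_N\to Tf$ pointwise, which is exactly what the $L^s$-convergence furnished by Theorem 9 was designed to make available, and is precisely the overlooked detail flagged in the introduction.
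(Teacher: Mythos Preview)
Your proposal is correct and follows essentially the same route as the paper's own proof: necessity via Lemma 8, sufficiency by applying Theorem 9 to $f\in\widehat{\mathcal{D}}_0$, pushing the $L^{p_0}$-convergence through $T$, passing to an a.e.\ convergent subsequence to obtain the pointwise domination, exploiting $p$-subadditivity, and then closing with the density of $\widehat{\mathcal{D}}_0$ from Theorem 7. Your treatment of the density step (spelling out the Cauchy argument in the quasi-Banach space $L^p_w$) is slightly more detailed than the paper's, which simply invokes ``a density argument,'' but there is no substantive difference.
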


\begin{proof}
 Since $T$ is a bounded linear operator on $L^{p_0}(\mathbb{R}^{n})$, $T$ is well defined on $H^{p}_w(\mathbb{R}^{n}) \cap L^{p_0}(\mathbb{R}^{n})$. If $T$ can be extended to a bounded operator from $H^{p}_w(\mathbb{R}^{n})$ into $L^{p}_w(\mathbb{R}^{n})$, then $\| Ta \|_{L^{p}_w} \leq c_p \|a \|_{H^{p}_w}$ for all $w$-atom $a$. By Lemma \ref{estim atomo}, there exists a universal constant $C$ such that $\|a \|_{H^{p}_w} \leq C < \infty$ for all $w$-atom $a$; so $\| Ta \|_{L^{p}_w} \leq C_p $ for all $w$-atom $a$. 

Conversely, taking into account the assumptions on $p$ and $p_0$, given $f \in \widehat{\mathcal{D}}_0$, by Theorem \ref{serie atomica}, there exists a $w-(p, p_0, d)$ atomic decomposition such that $\sum_j |\lambda_j|^{p} \lesssim \| f \|_{H^{p}_w}$ and $\sum_{j} \lambda_j a_j = f$ in $L^{p_0}(\mathbb{R}^{n})$. From the boundedness of $T$ on $L^{p_0}(\mathbb{R}^{n})$ we have that the sum $\sum_j \lambda_j Ta_j$ converges a $Tf$ in $L^{p_0}(\mathbb{R}^{n})$, thus there exists a subsequence of natural numbers $\{k_N \}_{N \in \mathbb{N}}$ such that $\lim_{N \rightarrow +\infty} \sum_{j=-k_N}^{k_N} \lambda_{j} Ta_{j}(x)= Tf(x)$ a.e.$x \in \mathbb{R}^{n}$, this implies
$$|Tf(x)| \leq \sum_j |\lambda_j Ta_j(x)|, \,\,\,\, a.e.x \in \mathbb{R}^{n}.$$
If $\| Ta \|_{L^{p}_w} \leq C_p < \infty $ for all $w-(p, p_0, d)$ atom $a$, and since $0< p \leq 1$ we get
$$\|Tf \|_{L^{p}_w}^{p} \leq \sum_j |\lambda_j|^{p} \|Ta_j \|_{L^{p}_w}^{p} \leq C_{p}^{p} \sum_j |\lambda_j|^{p} \leq C_{p}^{p} \|f \|_{H^{p}_w}^{p}$$
for all $f \in\widehat{\mathcal{D}}_0$. By theorem \ref{dense set} and Lemma \ref{doblante}, we have that $\widehat{\mathcal{D}}_0$ is a dense subspace of $H^{p}_w(\mathbb{R}^{n})$, so the theorem follows by a density argument.
\end{proof}

\section{Molecular decomposition}

Before giving our molecular reconstruction for $H^{p}_{w}(\mathbb{R}^{n})$, we need to introduce the following discrete maximal: 
given $\phi \in \mathcal{S}(\mathbb{R}^{n})$ and $f \in \mathcal{S}'(\mathbb{R}^{n})$, we define
$$M^{dis}_{\phi}f(x) = \sup_{j \in \mathbb{Z}} \left| (\phi^{j} \ast f) (x) \right|,$$
where $\phi^{j}(x) = 2^{jn} \phi(2^{j}x).$ From \cite[Lemma 3.2 and Proof of Theorem 3.3]{nakai}, it follows that for all 
$f \in  \mathcal{S}'(\mathbb{R}^{n})$ and all $0 < \theta < 1$
\begin{equation} \label{maximal discreta}
\mathcal{M}_N f(x) \leq C \left[ M \left(\left(M^{dis}_{\phi}f \right)^{\theta}\right)(x)\right]^{\frac{1}{\theta}}, \,\,\, for \,\, all \,\, x \in \mathbb{R}^{n}, 
\end{equation}
if $N$ is sufficiently large. This inequality gives the following result.

\begin{lemma} \label{Hpw norm discreta} Let $\phi \in \mathcal{S}(\mathbb{R}^{n})$ and $f \in \mathcal{S}'(\mathbb{R}^{n})$.
If $\omega \in \mathcal{A}_{\infty}$ and $0 < p \leq 1$, then $\| f \|_{H^{p}_{w}} \leq C \| M^{dis}_{\phi}f \|_{L^{p}_{w}}$,
where $C$ is a positive constant which does not depend on $f$.
\end{lemma}

\begin{proof} From (\ref{maximal discreta}) above, we have
\[
\| f \|_{H^{p}_{w}} = \| \mathcal{M}_N f \|_{L^{p}_{w}} \lesssim 
\left\| \left[ M \left(\left(M^{dis}_{\phi}f \right)^{\theta}\right)(\cdot)\right]^{\frac{1}{\theta}} \right\|_{L^{p}_{w}}
= \left\| M \left(\left(M^{dis}_{\phi}f \right)^{\theta}\right) \right\|_{L^{p/\theta}_{w}}^{1/\theta}.
\] 
By taking $0 < \theta < p$ such that $\frac{p}{\theta} > \widetilde{q}_w$, by Theorem \ref{max pesada}, we get
\[
\| f \|_{H^{p}_{w}} \lesssim
\left\| M \left(\left(M^{dis}_{\phi}f \right)^{\theta}\right) \right\|_{L^{p/\theta}_{w}}^{1/\theta}
\lesssim \left\| \left(M^{dis}_{\phi}f \right)^{\theta} \right\|_{L^{p/\theta}_{w}}^{1/\theta}
= \left\| M^{dis}_{\phi}f \right\|_{L^{p}_{w}}.
\]
Consequently, we obtain the desired result.
\end{proof}

Our definition of molecule is an adaptation from that given in \cite{nakai} by E. Nakai and Y. Sawano in the setting of the variable Hardy spaces.

\begin{definition} Let $w \in \mathcal{A}_{\infty}$ with critical index $\widetilde{q}_w$ and critical index $r_w$ for reverse H\"older condition. Let $0 < p \leq 1$, $\max \{ 1, p(\frac{r_w}{r_w -1}) \} < p_0 \leq +\infty$ and $d \in \mathbb{Z}$ such that $d \geq \lfloor n(\frac{\widetilde{q}_w}{p} -1) \rfloor$. We say that a function $m(\cdot)$ is a $w-(p,p_0, d)$ molecule centered at a ball $B=B(x_0, r)$ if it satisfies the following conditions:

\

$(m1)$ $\| m \|_{L^{p_0}(B(x_0, 2r))} \leq |B|^{\frac{1}{p_0}} w(B)^{- \frac{1}{p}}$.

\

$(m2)$ $|m(x)| \leq w(B)^{-\frac{1}{p}} \left(  1 + \frac{|x-x_0|}{r} \right)^{-2n - 2d - 3}$ for all $x \in \mathbb{R}^{n} \setminus  B(x_0, 2r)$.

\

$(m3)$ $\int_{\mathbb{R}^{n}} x^{\alpha} m(x) dx =0$ for every multi-index $\alpha$ with $|\alpha| \leq d$.
\end{definition}

\begin{remark} \label{Lp estim molecule}
The conditions $(m1)$ and $(m2)$ imply that $\| m \|_{L^{p_0}(\mathbb{R}^{n})} \leq c \frac{|B|^{\frac{1}{p_0}}}{ w(B)^{\frac{1}{p}}}$, where $c$ is a positive constant independent of the molecule $m$.
\end{remark}

From the definition of molecule is clear that a $w-(p, p_0, d)$ atom is a $w-(p, p_0, d)$ molecule.

In view of Lemma \ref{estim atomo}, the following theorem assure, among other things, that the pointwise inequality in $(m2)$ is a good substitute for "the loss of compactness in the support of an atom".

\begin{theorem} \label{Hp estim mol} Let $0 < p \leq 1$, $w \in \mathcal{A}_{\infty}$ and $f \in \mathcal{S}'(\mathbb{R}^{n})$ such that $f = \sum_{j}  \lambda_j m_j$ in $\mathcal{S}'(\mathbb{R}^{n})$, where $\{ \lambda_j \}$ is a sequence of positive numbers  belonging to $\ell^{p}(\mathbb{N})$ and the functions $m_j$ are $(p, p_0, d)$-molecules centered at $B_j$ with respect to  the weight $w$. Then
$f \in H^{p}_{w}(\mathbb{R}^{n})$ with 
\[
\| f \|_{H^{p}_w}^{p} \leq C_{w, p, p_0}  \sum_{j} \lambda_{j}^{p}.
\] 
\end{theorem}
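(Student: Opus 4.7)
The plan is to reduce the statement to a uniform bound on a single molecule and then split that bound into a local and a tail region. Since $f = \sum_j \lambda_j m_j$ in $\mathcal{S}'(\mathbb{R}^n)$ with $\lambda_j > 0$, pairing both sides with the Schwartz function $y \mapsto t^{-n}\varphi(t^{-1}(x-y))$ and applying the triangle inequality gives $\mathcal{M}_N f(x) \leq \sum_j \lambda_j \mathcal{M}_N m_j(x)$. Because $0 < p \leq 1$, the subadditivity $(\sum a_j)^p \leq \sum a_j^p$ for nonnegative $a_j$ yields $\|f\|_{H^p_w}^p \leq \sum_j \lambda_j^p \|m_j\|_{H^p_w}^p$, so it suffices to prove the uniform bound $\|m\|_{H^p_w} \leq C_{w,p,p_0}$ for every $w$-$(p,p_0,d)$ molecule $m$.

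Fix such an $m$ centered at $B = B(x_0, r)$ and a Schwartz $\phi$ with $\int \phi \neq 0$, so that $\|m\|_{H^p_w} \simeq \|M_\phi m\|_{L^p_w}$. I would split $\int_{\mathbb{R}^n}[M_\phi m]^p w$ into an integral over $B(x_0, 4r)$ and one over its complement. On $B(x_0, 4r)$ the argument is exactly the local part of the proof of Lemma 8: use $M_\phi m \lesssim Mm$, H\"older's inequality with exponent $p_0/p$, the global $L^{p_0}$ control $\|m\|_{L^{p_0}(\mathbb{R}^n)} \lesssim |B|^{1/p_0} w(B)^{-1/p}$ from Remark 12, the $L^{p_0}$-boundedness of $M$, the reverse H\"older property $w \in RH_{(p_0/p)'}$ (equivalent to $p_0 > p\, r_w/(r_w-1)$), and doubling of $w$. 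This produces a contribution bounded by an absolute constant.

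For the complement, the moment condition $(m3)$ permits the rewriting $(\phi_t \ast m)(x) = \int [\phi_t(x-y) - q_{x,t}(y)]\, m(y)\, dy$, where $q_{x,t}$ is the degree-$d$ Taylor polynomial of $y \mapsto \phi_t(x-y)$ about $x_0$. I would split the $y$-integral into $A = \{|y-x_0| \leq |x-x_0|/2\}$ and $A^c$. On $A$, the Taylor remainder is controlled by $c\,|y-x_0|^{d+1} t^{-n-d-1}(1+|x-x_0|/(2t))^{-N}$, and the supremum over $t > 0$ produces $c\,|y-x_0|^{d+1}|x-x_0|^{-n-d-1}$; then $A$ is split further into $B(x_0, 2r)$, where $(m1)$ together with H\"older is applied, and the annulus $\{2r < |y-x_0| \leq |x-x_0|/2\}$, where $(m2)$ is applied. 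The decay exponent $2n+2d+3$ in $(m2)$ is precisely the one that makes the annular integral converge and contribute $\lesssim r^{n+d+1} w(B)^{-1/p}$. On $A^c$, one estimates the convolution piece $\int_{A^c}\phi_t(x-y)m(y)\,dy$ and the polynomial piece $\int_{A^c}q_{x,t}(y)m(y)\,dy$ separately, in each case combining $(m2)$ with the Schwartz decay of $\phi_t$ and of its derivatives near $x-x_0$; both pieces yield $\lesssim w(B)^{-1/p}(r/|x-x_0|)^{2n+2d+3}$, dominated by the contribution from $A$. Gathering everything, $M_\phi m(x) \lesssim w(B)^{-1/p}(r/|x-x_0|)^{n+d+1} \lesssim w(B)^{-1/p}[M\chi_B(x)]^{(n+d+1)/n}$ for $x \notin B(x_0, 4r)$.

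The hypothesis $d \geq \lfloor n(\widetilde{q}_w/p - 1)\rfloor$ forces $(n+d+1)p/n > \widetilde{q}_w$, so $w \in \mathcal{A}_{(n+d+1)p/n}$ and Theorem 3 gives $\int [M\chi_B]^{(n+d+1)p/n} w \lesssim w(B)$; combined with the $w(B)^{-1}$ prefactor this makes the complement integral $\lesssim 1$ as well, completing the uniform bound. The hardest step is the analysis on $A^c$: because molecules lack compact support, the moment condition cannot be used locally there to cancel the polynomial correction $q_{x,t}m$, and the sharp decay hypothesis in $(m2)$ is essential both for controlling the annular integral on $A$ and for bounding the polynomial integral on $A^c$.
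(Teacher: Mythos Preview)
Your argument is correct and follows essentially the same route as the paper. The only presentational difference is that you first reduce to a uniform bound on a single molecule and then prove the pointwise estimate $M_\phi m(x)\lesssim \chi_{4B}(x)Mm(x)+w(B)^{-1/p}[M\chi_B(x)]^{(n+d+1)/n}$, whereas the paper writes the analogous pointwise inequality for $M_\phi f$ as a sum over $j$ before taking $p$-th powers; the two orderings are trivially equivalent because of the $p\le 1$ subadditivity you invoke. Your detailed $A/A^{c}$ splitting of the $y$-integral (using $(m1)$ on $B(x_0,2r)$, $(m2)$ on the annulus and on $A^{c}$, and the Schwartz decay of $\phi_t$ and its derivatives) is exactly the content of the Nakai--Sawano pointwise estimate (Theorem~5.2 in \cite{nakai}) that the paper cites without reproducing; after that, both proofs finish identically via H\"older with $p_0/p$, $w\in RH_{(p_0/p)'}$, doubling, and Theorem~3 applied with exponent $(n+d+1)p/n>\widetilde q_w$.
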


\begin{proof} Let $\phi \in C_{c}^{\infty}(\mathbb{R}^{n})$ such that $\chi_{B(0,1)} \leq \phi  \leq \chi_{B(0,2)}$, we set $\phi^{k}(x) = 2^{kn}\phi(2^{k}x)$ where $k \in \mathbb{Z}$. Since  $f = \sum_{j}  \lambda_j m_j$ in the sense of the distributions we have that 
\[
|(\phi^{k} \ast f)(x)| \leq \sum_{j=1}^{\infty} \lambda_j |(\phi^{k} \ast m_j)(x)|,
\]
for all $x \in \mathbb{R}^{n}$ and all $k \in \mathbb{Z}$. We observe that the argument used in the proof of Theorem 5.2 in \cite{nakai} to obtain the pointwise inequality $(5.2)$ works in this setting, but considering now the conditions $(m1)$, $(m2)$ and $(m3)$. Therefore we get
\[
M_{\phi}^{dis}(f)(x) \lesssim \sum_{j} \lambda_{j} \chi_{2 B_j}(x) M(m_j)(x) +   \sum_{j} \lambda_{j} \frac{ \left[ M(\chi_{B_j})(x) \right]^{\frac{n + d_w +1}{n}}}{w(B_j)^{\frac{1}{p}}}, \,\,\,\, (x \in \mathbb{R}^{n})
\]
where $M$ is the Hardy-Littlewood maximal operator.

Since $0 < p \leq 1$, it follows that
\[
[M_{\phi}^{dis}(f)(x)]^{p} \lesssim \sum_{j} \lambda_{j}^{p} \chi_{2 B_j}(x) [M(m_j)(x)]^{p} +   \sum_{j} \lambda_{j}^{p} \frac{ \left[ M(\chi_{B_j})(x) \right]^{p \frac{n + d_w +1}{n}}}{w(B_j)}, \,\,\,\, (x \in \mathbb{R}^{n})
\]
by integrating with respect to $w$ we get 
\[
\int [M_{\phi}^{dis}(f)(x)]^{p} w(x) dx \lesssim \sum_{j} \lambda_{j}^{p} \int \chi_{2 B_j}(x) [M(m_j)(x)]^{p} w(x) dx 
\]
\[
+   \sum_{j} \lambda_{j}^{p} \int \frac{ \left[ M(\chi_{B_j})(x) \right]^{p \frac{n + d_w +1}{n}}}{w(B_j)} w(x) dx.
\]
On the right-side of this inequality, we apply  H\"older's inequality with $p_0 /p$, Remark \ref{Lp estim molecule}, Lemma \ref{doblante} and use that $w \in RH_{(\frac{p_0}{p})'}$ ($p_0 > p(\frac{r_w}{r_w -1})$) for the first term, for the second term we have that $\frac{(n+d+1)p}{n} >\widetilde{q}_w$, so $w \in \mathcal{A}_{\frac{(n+d+1)p}{n}}$, then we apply Theorem \ref{max pesada}. Finally, by invoking Lemma \ref{Hpw norm discreta} we obtain
\[
\| f \|_{H^{p}_w}^{p} \leq C_{w, p, p_0}  \sum_{j} \lambda_{j}^{p}.
\] 
This completes the proof.
\end{proof}

\begin{theorem} \label{Hpw-Hpw estim}
Let $T$ be a bounded linear operator from $L^{p_0}(\mathbb{R}^{n})$ into $L^{p_0}(\mathbb{R}^{n})$ for some $1 < p_0 < +\infty$. If $w \in \mathcal{A}_{\infty}$ with critical index $r_w$,
$0 < p \leq 1 < \frac{r_w -1}{r_w} p_0$ or  $0 < p < \frac{r_w -1}{r_w} p_0 \leq 1$ and $Ta$ is a $w - (p, p_0, d_2)$ molecule for each 
$w - (p, p_0, d_1)$ atom $a$, then $T$ can be extended to an $H^{p}_{w}(\mathbb{R}^{n}) - H^{p}_{w}(\mathbb{R}^{n})$ bounded linear operator.
\end{theorem}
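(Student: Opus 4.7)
The plan is to run the argument of Theorem 10 almost verbatim, but at the very last step, instead of estimating $\|Tf\|_{L^p_w}^p$ by summing $\|Ta_j\|_{L^p_w}^p$, to feed the molecular expansion of $Tf$ into Theorem 13 in order to get the stronger $H^p_w$ bound directly.

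First, I would fix $f\in\widehat{\mathcal{D}}_0$. The hypotheses on $p$ and $p_0$ are exactly those of Theorem 9, so that theorem produces a $w-(p,p_0,d)$ atomic decomposition $f=\sum_j\lambda_j a_j$ with $\sum_j|\lambda_j|^p\lesssim\|f\|_{H^p_w}^p$, converging in $L^s(\mathbb{R}^n)$ for every $1<s<\infty$. Specializing to $s=p_0$ and using the $L^{p_0}$-boundedness of $T$, the partial sums of $\sum_j\lambda_j\,Ta_j$ converge to $Tf$ in $L^{p_0}(\mathbb{R}^n)$, hence also in $\mathcal{S}'(\mathbb{R}^n)$.

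Next, by the standing hypothesis, each $Ta_j$ is a $w-(p,p_0,d)$ molecule (centered at some ball $B_j'$). Thus $Tf$ is represented in $\mathcal{S}'$ as an $\ell^p$-sum of molecules, with coefficients satisfying $\sum_j\lambda_j^p\lesssim\|f\|_{H^p_w}^p$. Theorem 13 applies directly and yields
\[
\|Tf\|_{H^p_w}^{p}\leq C_{w,p,p_0}\sum_j\lambda_j^{p}\lesssim\|f\|_{H^p_w}^{p}.
\]
Since $\widehat{\mathcal{D}}_0$ is dense in $H^p_w(\mathbb{R}^n)$ by Theorem 7, a standard density argument then extends $T$ uniquely to a bounded linear operator from $H^p_w(\mathbb{R}^n)$ into itself.

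The argument is essentially a clean plug-and-play once the atomic decomposition (Theorem 9) and the molecular reconstruction (Theorem 13) are in hand; the only point that requires care is the initial one, namely checking that the atomic series, which Theorem 9 asserts converges in $L^s$ for every $1<s<\infty$, can legitimately be transported through $T$ by the $L^{p_0}$-boundedness so that the molecular sum really represents $Tf$ in $\mathcal{S}'$. This is what allows the identification $Tf=\sum_j\lambda_j\,Ta_j$ needed to invoke Theorem 13, and without the $L^{p_0}$-convergence built into Theorem 9 one would only have $\mathcal{S}'$-convergence of the original atomic decomposition, which would not suffice.
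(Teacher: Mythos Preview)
Your proof is correct and follows essentially the same approach as the paper: apply Theorem 9 to $f\in\widehat{\mathcal{D}}_0$ to get an atomic decomposition converging in $L^{p_0}$, push it through $T$ by $L^{p_0}$-boundedness to obtain $Tf=\sum_j\lambda_j\,Ta_j$ in $\mathcal{S}'$, invoke Theorem 13 on the resulting molecular expansion, and finish by density via Theorem 7. Your additional remark on why the $L^{p_0}$-convergence (rather than mere $\mathcal{S}'$-convergence) of the atomic series is essential is precisely the point the paper is making throughout.
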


\begin{proof} Taking into account the assumptions on $p$ and $p_0$,
given $f \in \widehat{\mathcal{D}}_0$, from Theorem \ref{serie atomica} it follows that there exists a sequence of $w - (p, p_0, d_1)$ atoms $\{ a_j \}$ and a sequence of scalars $\{ \lambda_j \}$ with 
\begin{equation}
\sum_{j} |\lambda_j |^{p} \lesssim \| f \|_{H^{p}_{w}}^{p},  \label{atomic decomp}
\end{equation}
such that $f = \sum_{j} \lambda_j a_j$ in $L^{p_0}(\mathbb{R}^{n})$. From the boundedness of $T$ on $L^{p_0}(\mathbb{R}^{n})$ we have that $Tf = \sum_{j} \lambda_j Ta_j$ in $L^{p_0}(\mathbb{R}^{n})$ and therefore in $\mathcal{S}'(\mathbb{R}^{n})$. By hypothesis $Ta_j$ is a $w - (p, p_0, d_2)$ molecule for all $j$, so Theorem \ref{Hp estim mol} and the inequality in (\ref{atomic decomp}) imply that
\[
\| Tf \|_{H^{p}_w}^{p} \lesssim \sum_{j} |\lambda_j |^{p} \lesssim \| f \|_{H^{p}_{w}}^{p}, 
\]
for all $f \in \widehat{\mathcal{D}}_0$, so the theorem follows from the density of  $\widehat{\mathcal{D}}_0$ in $H^{p}_{w}(\mathbb{R}^{n})$.
\end{proof}

\section{Applications}

\subsection{Singular integrals} Let $\Omega \in C^{\infty}(S^{n-1})$ with $\int_{S^{n-1}} \Omega(u) d\sigma(u)=0$. We define the operator $T$ by
\begin{equation}
Tf(x) = \lim_{\epsilon \rightarrow 0^{+}} \int_{|y| > \epsilon} \frac{\Omega(y/|y|)}{|y|^{n}} f(x-y) \, dy, \,\,\,\, x \in \mathbb{R}^{n}. \label{T}
\end{equation}
It is well known that $\widehat{Tf}(\xi) = m(\xi) \widehat{f}(\xi)$, where the multiplier $m$ is homogeneous of degree $0$ and is indefinitely diferentiable on $\mathbb{R}^{n} \setminus \{0\}$. Moreover, if $k(y) = \frac{\Omega(y/|y|)}{|y|^{n}}$ we have
\begin{equation}
|\partial^{\alpha}_{y} k(y) |\leq C |y|^{-n-|\alpha|}, \,\,\,\,  \textit{for all } \,\, y \neq 0 \,\, \textit{and all multi-index} \,\, \alpha. \label{k estimate}
\end{equation}
The operator $T$ results bounded on $L^{s}(\mathbb{R}^{n})$ for all $1 < s < +\infty$ and of weak-type $(1,1)$ (see \cite{elias}).

Let $0 < p \leq 1$ and $d = \lfloor n (\frac{\widetilde{q}_w}{p}-1) \rfloor$. Given a $w-(p, p_0, n+2d+2)$ atom $a(\cdot)$ with support in the ball $B(x_0, r)$ we have that
\begin{equation}
\| Ta \|_{L^{p_0}(B(x_0, 2r))} \leq C \| a \|_{p_0} \leq C |B|^{1/p_0} w(B)^{-1/p}, \label{Ta 0}
\end{equation}
since $T$ is bounded on $L^{p_0}(\mathbb{R}^{n})$. In view of the moment condition of $a(\cdot)$ we obtain
\[
Ta(x)=  \int_{B} k(x-y) a(y) dy = \int_{B} [k(x-y) - q_{n+2d+2}(x,y)] a(y) dy, \,\, x \notin B=B(x_0, 2r)
\]
where $q_{n+2d+2}$ is the degree $n+2d+2$ Taylor polynomial of the function $y \rightarrow k(x-y)$ expanded around $x_0$. From the estimate in (\ref{k estimate}), and the standard estimate of the remainder term of the Taylor expansion, there exists $\xi$ between  $y$ and $x_0$ such that
\begin{equation}
| Ta(x) | \leq C \| a \|_{1} \frac{r^{n+2d+3}}{|x - \xi|^{2n+2d+3}}
\leq C\frac{ r^{2n+2d+3}}{w(B)^{1/p}}  |x - x_0|^{-2n-2d-3}, \,\,\, x \notin B(x_0, 2r), \label{Ta}
\end{equation}
this inequality and a simple computation allow us to obtain
\begin{equation}
|Ta(x)| \leq C  w(B)^{-\frac{1}{p}} \left(  1 + \frac{|x-x_0|}{r} \right)^{-2n - 2d - 3}, \,\,\, \textit{for all} \,\, x \notin  B(x_0, 2r).  \label{Ta 2}
\end{equation}
From the estimate in (\ref{Ta}) we obtain that the function $x \rightarrow x^{\alpha} Ta(x)$ belongs to $L^{1}(\mathbb{R}^{n})$ for each $|\alpha| \leq d$, so
\[
|((-2\pi i x)^{\alpha}Ta) \,\, \widehat{} \,\, (\xi) | = |\partial^{\alpha}_{\xi} (m(\xi) \widehat{a}(\xi))| = \left| \sum_{\beta \leq \alpha} c_{\alpha, \beta} \, (\partial^{\alpha - \beta}_{\xi} m)(\xi) \,  (\partial^{\beta}_{\xi} \widehat{a}) (\xi) \right|
\]
\[
= \left| \sum_{\beta \leq \alpha} c_{\alpha, \beta} \, (\partial^{\alpha - \beta}_{\xi} m)(\xi) \,  ((-2\pi i x)^{\beta} a) \,\, \widehat{} \,\, (\xi) \right|,
\]
from the homogeneity of the function $\partial^{\alpha - \beta}_{\xi} m$ we obtain that
\begin{equation}
|((-2\pi i x)^{\alpha}Ta) \,\, \widehat{} \,\, (\xi) |\leq C \sum_{\beta \leq \alpha}| c_{\alpha, \beta}|  \frac{\left| ((-2\pi i x)^{\beta} a) \,\, \widehat{} \,\, (\xi) \right|}{|\xi|^{|\alpha| -| \beta|}}, \,\,\,  \xi \neq 0. \label{limite}
\end{equation}
Since $\displaystyle{\lim_{\xi \rightarrow 0}}  \frac{\left| ((-2\pi i x)^{\beta} a) \,\, \widehat{} \,\, (\xi) \right|}{|\xi|^{|\alpha| -| \beta|}} =0$ for each $\beta \leq \alpha$ (see 5.4, pp. 128, in \cite{stein}), taking the limit as $\xi \rightarrow 0$ at (\ref{limite}), we get 
\begin{equation}
\int_{\mathbb{R}^{n}} (-2\pi i x)^{\alpha} Ta(x) \, dx = ((-2\pi i x)^{\alpha}Ta) \,\, \widehat{} \,\, (0) =0, \,\,\, \textit{for all} \,\,\, |\alpha| \leq d. \label{Ta 3} 
\end{equation}
From (\ref{Ta 0}), (\ref{Ta 2}) and (\ref{Ta 3}) it follows that there exists an universal constant $C>0$ such that $C Ta(\cdot)$ is a $w-(p, p_0, d)$ molecule if $a(\cdot)$ is a $w-(p, p_0, n+2d+2)$ atom. Taking $p_0 \in (1, +\infty)$ such that $1 < \frac{r_w -1}{r_w}p_0$ and since $T$ is bounded on $L^{p_0}(\mathbb{R}^{n})$, by Theorem \ref{Hpw-Hpw estim}, we get the following result.

\begin{theorem}
Let $T$ be the operator defined in (\ref{T}). If $w \in \mathcal{A}_{\infty}$ and $0 < p \leq 1$, then $T$ can be extended to an $H^{p}_{w}(\mathbb{R}^{n}) - H^{p}_{w}(\mathbb{R}^{n})$ bounded operator.
\end{theorem}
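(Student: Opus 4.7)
The approach is to apply Theorem 14 directly, since the three molecule conditions for $Ta$ have already been verified in the discussion leading to the statement. The proof reduces to selecting an admissible exponent $p_0$ and invoking that theorem.

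First, since $w \in \mathcal{A}_{\infty}$ the critical index $r_w$ satisfies $r_w > 1$, so I can choose $p_0 \in (1, +\infty)$ large enough that $\frac{r_w-1}{r_w}\, p_0 > 1$. For any $0 < p \leq 1$ this places us in the first range $0 < p \leq 1 < \frac{r_w-1}{r_w}\, p_0$ required by Theorem 14, and classical Calder\'on--Zygmund theory guarantees that $T \colon L^{p_0}(\mathbb{R}^n) \to L^{p_0}(\mathbb{R}^n)$ is bounded.

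Second, for a $w-(p, p_0, d)$ atom $a$ supported in $B(x_0, r)$, the three conditions in Definition 11 are exactly what the preceding calculations established: $(m1)$ comes from (\ref{Ta 0}), $(m2)$ from (\ref{Ta 2}), and $(m3)$ from (\ref{Ta 3}), each with an implicit universal constant $C$ independent of the atom $a$ and of the ball $B(x_0, r)$. Hence $C^{-1} Ta$ is a $w-(p, p_0, d)$ molecule centered at $B(x_0, r)$, so in particular $Ta$ itself (up to a harmless constant multiple) fits the hypothesis of Theorem 14.

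All the hypotheses of Theorem 14 being satisfied, the theorem yields the bounded extension $T \colon H^{p}_w(\mathbb{R}^n) \to H^{p}_w(\mathbb{R}^n)$, and the proof is complete. The genuine technical content of the argument sits earlier, in the derivation of (\ref{Ta 3}): one cannot read off the vanishing moments by a direct integration since $Ta$ typically lacks compact support, so the Fourier-side identity using the homogeneity of degree zero of the multiplier $m$ was needed. With that key fact in hand, the present theorem is a one-line consequence of the molecular decomposition result of Section 3.
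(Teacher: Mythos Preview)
Your proof is correct and follows essentially the same route as the paper: choose $p_0$ large enough that $1 < \frac{r_w-1}{r_w}\,p_0$, observe from (\ref{Ta 0}), (\ref{Ta 2}), (\ref{Ta 3}) that a universal constant multiple of $Ta$ is a $w-(p,p_0,d)$ molecule, and invoke Theorem 14. Your closing remark about the Fourier-side argument for (\ref{Ta 3}) accurately identifies where the real work was done.
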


In particular, the Hilbert transform and the Riesz transforms admit a continuous extension on $H^{p}_{w}(\mathbb{R})$ and $H^{p}_{w}(\mathbb{R}^{n})$, for each $w \in \mathcal{A}_{\infty}$ and $0 < p \leq 1$, respectively.

\begin{remark} \label{estim fuera de bola}
Let $d = \lfloor n (\frac{\widetilde{q}_w}{p}-1) \rfloor$. If $a(\cdot)$ is a $w-(p, p_0, d)$ atom with $1 < \frac{r_w-1}{r_w} p_0$, then by proceeding as in the estimation of (\ref{Ta}) we find that
\begin{equation*}
| Ta(x) | \leq C\frac{ r^{n+d+1}}{w(B)^{1/p}}  |x - x_0|^{-n-d-1}, \,\,\, x \notin B(x_0, 2r),
\end{equation*}
so
\[
|Ta(x) | \leq C \frac{[M(\chi_{B})(x)]^{\frac{n+d+1}{n}}}{w(B)^{1/p}}, \,\,\, x \notin B(x_0, 2r),
\]
where $M$ is the Hardy-Littlewood maximal operator.
\end{remark}

\begin{lemma} \label{Lpw bound for Ta}
Let $p_0 \in (1, +\infty)$ such that $1 < \frac{r_w -1}{r_w} p_0$. If $T$ is the operator defined in (\ref{T}) and $0 < p \leq 1$, then there exists an universal constant $C>0$ such that $\|Ta\|_{L^{p}_w} \leq C$ for all $w-(p, p_0, d)$ atom $a(\cdot)$.
\end{lemma}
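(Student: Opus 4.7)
The plan is to bound the two pieces $\int_{B(x_0,2r)} |Ta|^p w$ and $\int_{\mathbb{R}^n \setminus B(x_0,2r)} |Ta|^p w$ separately, mimicking the proof of Lemma 8 but applied to $Ta$ in place of the atom itself. All of the delicate estimates on $Ta$ have already been obtained: (\ref{Ta 0}) controls $Ta$ in $L^{p_0}(2B)$, and Remark 16 provides the pointwise bound outside $2B$. The present lemma just combines them with the right weighted inequalities.

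For the local integral, I would apply H\"older's inequality with conjugate exponents $p_0/p$ and $(p_0/p)'$:
\[
\int_{2B} |Ta|^p \, w \;\leq\; \|Ta\|_{L^{p_0}(2B)}^{p} \left( \int_{2B} w^{(p_0/p)'} \right)^{1/(p_0/p)'}.
\]
The first factor is controlled by (\ref{Ta 0}), giving a constant multiple of $|B|^{p/p_0} w(B)^{-1}$. The hypothesis $p \leq 1 < \tfrac{r_w-1}{r_w} p_0$ is equivalent to $(p_0/p)' < r_w$, so $w \in RH_{(p_0/p)'}$; combining the reverse H\"older inequality with the doubling property of $w$ (Lemma 2) to pass from $2B$ back to $B$, the second factor is dominated by a constant multiple of $w(B)\, |B|^{-p/p_0}$. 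The powers of $|B|$ cancel, as do the powers of $w(B)$, leaving a bound independent of $a$.

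For the global integral, I would substitute the pointwise estimate of Remark 16,
\[
|Ta(x)| \leq C\, w(B)^{-1/p} [M(\chi_B)(x)]^{(n+d+1)/n}, \qquad x \notin 2B,
\]
to obtain
\[
\int_{\mathbb{R}^n \setminus 2B} |Ta|^p \, w \;\leq\; C\, w(B)^{-1} \int_{\mathbb{R}^n} [M(\chi_B)(x)]^{(n+d+1)p/n}\, w(x)\, dx.
\]
Since $d \geq \lfloor n(\widetilde{q}_w/p - 1) \rfloor$ and $d \in \mathbb{Z}$, one has $p(n+d+1)/n > \widetilde{q}_w$, so $w \in \mathcal{A}_{p(n+d+1)/n}$. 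Muckenhoupt's Theorem 3 then yields $\int [M(\chi_B)]^{(n+d+1)p/n}\, w \lesssim \int \chi_B\, w = w(B)$, which absorbs the factor $w(B)^{-1}$ and gives a bound independent of $a$.

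I expect no real obstacle: both weighted inequalities are calibrated precisely to the two index ranges forced by the hypotheses on $p$, $p_0$ and $d$, and the verifications (i) $(p_0/p)' < r_w$ and (ii) $p(n+d+1)/n > \widetilde{q}_w$ are immediate from the assumptions. As a sanity check, one can bypass the splitting entirely by noting that $C\,Ta$ is a $w$-$(p,p_0,d)$ molecule (this was established above in the derivation of Theorem 15), applying Theorem 13 to the one-term sum to conclude $\|Ta\|_{H^p_w} \lesssim 1$, and then using the elementary pointwise domination $|Ta(x)| \lesssim \mathcal{M}_N(Ta)(x)$ almost everywhere to pass to the $L^p_w$-norm; but the direct two-piece estimate above is shorter and essentially reproduces the computation inside the proof of Theorem 13.
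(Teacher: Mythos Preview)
Your proposal is correct and follows essentially the same route as the paper's proof: split $\|Ta\|_{L^p_w}^p$ into the integral over $2B$ and its complement, handle the local piece via H\"older with exponent $p_0/p$ together with $w\in RH_{(p_0/p)'}$ and (\ref{Ta 0}), and handle the far-away piece via Remark 16 and Theorem 3 using $w\in\mathcal{A}_{p(n+d+1)/n}$. The only cosmetic slip is calling the relation between the hypothesis and $(p_0/p)'<r_w$ an ``equivalence'' --- it is the implication (from $p\le 1$ and $p_0>\tfrac{r_w}{r_w-1}$) that you actually use, and that implication is exactly what the paper invokes.
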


\begin{proof}
Given a $w - (p, p_0, d)$ atom $a(\cdot)$, let $2B=B(x_0, 2r)$, where $B=B(x_0, r)$ is the ball containing the support of $a(\cdot)$. We write
\[
\int_{\mathbb{R}^{n}} |Ta(x)|^{p} w(x) dx = \int_{2B} |Ta(x)|^{p} w(x) dx + \int_{\mathbb{R}^{n} \setminus 2B} |Ta(x)|^{p} w(x) dx = I + II.
\]
Since $T$ is bounded on $L^{p_0}(\mathbb{R}^{n})$ and $w \in RH_{(\frac{p_0}{p})'}$ ($p \leq 1 <  \frac{r_w -1}{r_w} p_0$), the H\"older's inequality applied with $\frac{p_0}{p}$, and the condition $(a2)$ give
\[
I \leq C \| a \|_{p_0}^{p} |B|^{-p/p_0} w(B) = C.
\]
From Remark \ref{estim fuera de bola} and since that $w \in \mathcal{A}_{p\frac{n+d+1}{n}}$, ($p \frac{n+d+1}{n} > \widetilde{q}_{w}$), we get
\[
II \leq w(B)^{-1} \int_{\mathbb{R}^{n} } [M(\chi_{B})(x)]^{p\frac{n+d+1}{n}} w(x) dx \leq C w^{-1}(B) \int_{B} w(x) dx = C,
\]
where the second inequality follows from Theorem \ref{max pesada}. This completes the proof.
\end{proof}

\begin{theorem}
Let $T$ be the operator defined in (\ref{T}). If $w \in \mathcal{A}_{\infty}$ and $0 < p \leq 1$, then $T$ can be extended to an $H^{p}_{w}(\mathbb{R}^{n}) - L^{p}_{w}(\mathbb{R}^{n})$ bounded operator.
\end{theorem}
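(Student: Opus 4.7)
The plan is to assemble Theorem 18 from the ingredients already established: the atomic criterion in Theorem 10 and the uniform atomic estimate in Lemma 17. Both are tailored to the situation at hand, so the argument is short once the parameters are chosen carefully.

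First I would fix the parameter $p_0$. Since $w \in \mathcal{A}_\infty$ implies $r_w > 1$, the quantity $\tfrac{r_w-1}{r_w} \in (0,1]$ (with the convention that it equals $1$ when $r_w = +\infty$, as noted after the definition of atom). Choose $p_0 \in (1,+\infty)$ large enough so that $1 < \tfrac{r_w-1}{r_w} p_0$; this is always possible. For this $p_0$, the first alternative on the parameters required by Theorem 10, namely $0 < p \leq 1 < \tfrac{r_w-1}{r_w} p_0$, is satisfied for every $0 < p \leq 1$. Moreover, we know from Calder\'on--Zygmund theory (cited just after (\ref{k estimate})) that the operator $T$ defined in (\ref{T}) is bounded on $L^{p_0}(\mathbb{R}^{n})$.

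Next, pick an integer $d \geq \lfloor n(\tfrac{\widetilde{q}_w}{p}-1) \rfloor$ so that the notion of $w-(p,p_0,d)$ atom is meaningful with these parameters. Lemma 17, applied with this choice of $p_0$, yields a universal constant $C > 0$ such that
\[
\| Ta \|_{L^{p}_{w}} \leq C
\]
for every $w-(p,p_0,d)$ atom $a$. In other words, $T$ is uniformly bounded in the $L^{p}_w$-norm on the class of atoms.

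Finally, I would invoke Theorem 10 with these parameters: since $T \colon L^{p_0} \to L^{p_0}$ is bounded, and $T$ is uniformly bounded in $L^{p}_w$ on every $w-(p,p_0,d)$ atom, the ``if'' direction of that theorem shows that $T$ extends to a bounded linear operator $H^{p}_w(\mathbb{R}^{n}) \to L^{p}_w(\mathbb{R}^{n})$. Since all the real work (the pointwise estimates, the cancellation argument, and the reduction to atoms via the $L^{p_0}$-convergent atomic decomposition of Theorem 9) has already been carried out in Lemma 17 and Theorem 10, there is no genuine obstacle left here; the only point requiring a moment's care is the selection of $p_0$ so that $w \in RH_{(p_0/p)'}$, which is exactly what the condition $1 < \tfrac{r_w-1}{r_w} p_0$ guarantees.
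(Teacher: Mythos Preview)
Your proposal is correct and follows exactly the paper's approach: the paper's proof of Theorem 18 is the single line ``The theorem follows from Lemma 17 and Theorem 10,'' and you have simply spelled out the choice of $p_0$ and $d$ needed to invoke those two results.
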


\begin{proof} The theorem follows from Lemma \ref{Lpw bound for Ta} and Theorem \ref{estim Hpw-Lpw}.
\end{proof}

\subsection{The Riesz potential}

For $0<\alpha <n,$ let $I_{\alpha }$ be the Riesz potential defined by
\begin{equation}
I_{\alpha }f(x)=\int_{\mathbb{R}^{n}} \frac{1}{\left\vert x-y\right\vert ^{n-\alpha }}f(y)dy,
\label{Ia}
\end{equation}%
$f\in L^{s}(\mathbb{R}^{n}),$ $1\leq s < \frac{n}{\alpha}$. A well known result of Sobolev gives the
boundedness of $I_{\alpha }$ from $L^{p}(\mathbb{R}^{n})$ into $L^{q}(%
\mathbb{R}^{n})$ for $1<p<\frac{n}{\alpha }$ and $\frac{1}{q}=\frac{1}{p}-%
\frac{\alpha }{n}.$ In \cite{steinweiss} E. Stein and G. Weiss used the theory of harmonic
functions of several variables to prove that these operators are bounded
from $H^{1}(\mathbb{R}^{n})$ into $L^{\frac{n}{n-\alpha }}(\mathbb{R}^{n}).$
In \cite{T-W}, M. Taibleson and G. Weiss obtained, using the molecular decomposition, the boundedness of the
Riesz potential $I_{\alpha }$ from $H^{p}(\mathbb{R}^{n})$
into $H^{q}(\mathbb{R}^{n}),$ for $0<p \leq 1$ and $\frac{1}{q}=\frac{1}{p}-\frac{%
\alpha }{n}$, independently S. Krantz obtained the same result in \cite{krantz}. We extend these results to the context of the weighted Hardy spaces using the weighted molecular theory developed in Section 3.

\

First we recall the definition of the critical indices for a weight $w$.

\begin{definition} Given a weight $w$, we denote by $\widetilde{q}_{w} = \inf \{ q>1 : w \in \mathcal{A}_q \}$ \textit{the critical index of $w$}, and we denote by $r_{w} = \sup \{ r > 1 : w \in RH_{r} \}$ \textit{the critical index of $w$  for the reverse H\"older condition.}
\end{definition}

\begin{lemma} \label{desig rwps}
Let  $0 < p < 1$. If $w^{1/p} \in \mathcal{A}_{1}$, then $ p \cdot r_{w^{p}} \leq r_{w} \leq r_{w^{p}}$.
\end{lemma}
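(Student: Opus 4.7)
The plan is to treat the two inequalities separately, both resting on the preliminary observation that under the hypothesis both $w$ and $w^p$ are themselves $\mathcal{A}_1$ weights. Indeed, applying Remark 1 to $w^{1/p}\in\mathcal{A}_1$ with exponent $p\in(0,1)$ gives $w=(w^{1/p})^p\in\mathcal{A}_1$, and applying Remark 1 a second time (again with exponent $p$) yields $w^p\in\mathcal{A}_1$. This is the only use of the hypothesis.

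For the upper inequality $r_w\leq r_{w^p}$, I would fix $s\in(1,r_w)$, so that $w\in RH_s$, and prove that $w^p\in RH_s$. The key step is to bound $\int_B w^{ps}=\int_B (w^s)^p\cdot 1$ by H\"older's inequality with exponents $1/p$ and $1/(1-p)$, obtaining $\tfrac{1}{|B|}\int_B w^{ps}\leq \bigl(\tfrac{1}{|B|}\int_B w^s\bigr)^p$. Applying the $RH_s$ inequality for $w$ yields a bound by $C^p\bigl(\tfrac{1}{|B|}\int_B w\bigr)^p$, and the $\mathcal{A}_1$ condition for $w$ then replaces this by $C^p[w]_{\mathcal{A}_1}^p(\operatorname{ess\,inf}_B w)^p=C^p[w]_{\mathcal{A}_1}^p\operatorname{ess\,inf}_B w^p\leq C^p[w]_{\mathcal{A}_1}^p\tfrac{1}{|B|}\int_B w^p$. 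This realizes the $RH_s$ inequality for $w^p$, so $s\leq r_{w^p}$, and letting $s\uparrow r_w$ gives the claim.

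For the lower inequality $p\,r_{w^p}\leq r_w$, I would fix $s\in(1,r_{w^p})$, so that $w^p\in RH_s$, and prove that $w\in RH_{ps}$. Raising the reverse H\"older inequality for $w^p$ to the power $1/p$ gives $\bigl(\tfrac{1}{|B|}\int_B w^{ps}\bigr)^{1/(ps)}\leq C^{1/p}\bigl(\tfrac{1}{|B|}\int_B w^p\bigr)^{1/p}$, and the $\mathcal{A}_1$ bound for $w^p$ then yields $\bigl(\tfrac{1}{|B|}\int_B w^p\bigr)^{1/p}\leq [w^p]_{\mathcal{A}_1}^{1/p}\operatorname{ess\,inf}_B w\leq [w^p]_{\mathcal{A}_1}^{1/p}\tfrac{1}{|B|}\int_B w$. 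This gives $w\in RH_{ps}$, so $ps\leq r_w$, and letting $s\uparrow r_{w^p}$ finishes the proof.

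The main pitfall to watch is the direction of the Jensen-type inequalities: for $0<p<1$, direct Jensen gives $\tfrac{1}{|B|}\int_B w^p\leq\bigl(\tfrac{1}{|B|}\int_B w\bigr)^p$, which is the wrong direction for what the argument needs. The proof therefore has to detour through the pointwise essinf bound furnished by the $\mathcal{A}_1$ conditions on $w$ and $w^p$ in order to invert that inequality up to a multiplicative constant; this is where the hypothesis $w^{1/p}\in\mathcal{A}_1$ (rather than a weaker $\mathcal{A}_\infty$ assumption) really enters.
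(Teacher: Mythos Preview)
Your overall strategy is right and close to the paper's, but the upper-inequality argument contains an exponent error that, as written, does not yield $w^p\in RH_s$. When you apply $RH_s$ for $w$ to $\bigl(\tfrac{1}{|B|}\int_B w^s\bigr)^p$, the $RH_s$ condition says $\tfrac{1}{|B|}\int_B w^s\le C^s\bigl(\tfrac{1}{|B|}\int_B w\bigr)^s$, so the correct bound is $C^{sp}\bigl(\tfrac{1}{|B|}\int_B w\bigr)^{sp}$, not $C^p\bigl(\tfrac{1}{|B|}\int_B w\bigr)^p$. Carrying the corrected power through the $\mathcal{A}_1$ step gives $\bigl(\tfrac{1}{|B|}\int_B w\bigr)^{sp}\le C'\bigl(\operatorname{ess\,inf}_B w^p\bigr)^s\le C'\bigl(\tfrac{1}{|B|}\int_B w^p\bigr)^s$, and the chain ends at $\tfrac{1}{|B|}\int_B w^{ps}\le C''\bigl(\tfrac{1}{|B|}\int_B w^p\bigr)^s$, which after taking the $1/s$-th power is exactly $RH_s$ for $w^p$. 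With your stated exponents the final line reads $\tfrac{1}{|B|}\int_B w^{ps}\le C''\,\tfrac{1}{|B|}\int_B w^p$; combined with Jensen's inequality $\tfrac{1}{|B|}\int_B w^p\le\bigl(\tfrac{1}{|B|}\int_B w^{ps}\bigr)^{1/s}$ this would force $\tfrac{1}{|B|}\int_B w^{ps}$ to be uniformly bounded over all balls, which is false in general. So the displayed inequality you obtain is \emph{not} the $RH_s$ inequality for $w^p$, and the fix is simply to track the factor $s$.

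On your closing remark about Jensen: for the \emph{lower} inequality $p\,r_{w^p}\le r_w$, Jensen is actually the right direction. One has $\bigl(\tfrac{1}{|B|}\int_B w^p\bigr)^{1/p}\le \tfrac{1}{|B|}\int_B w$ directly from H\"older since $0<p<1$, and the paper uses exactly this in place of your $\mathcal{A}_1$ detour through the essinf, making that half one step shorter. The ``wrong direction'' obstruction arises only in the \emph{upper} inequality $r_w\le r_{w^p}$; there the paper reverses it not via $w\in\mathcal{A}_1$ but via the companion fact $w^p\in RH_{1/p}$ (a consequence of $w\in\mathcal{A}_1$, which in turn follows from $w^{1/p}\in\mathcal{A}_1$). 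Both devices work once the exponents are straightened out.
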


\begin{proof} The condition $w^{1/p} \in \mathcal{A}_{1}$, with $0 < p < 1$, implies that $w^{p} \in RH_{1/p}$. It is well known that if $w \in RH_{r}$, then $w \in RH_{r + \epsilon}$ for some $\epsilon > 0$, thus $1/p < r_{w^{p}}$. Taking $1/p < t < r_{w^{p}}$ we have that $1 < pt < t$ and $w^{p} \in RH_{t}$, so
\[
\left( \frac{1}{|B|} \int_{B} [w(x)]^{pt} dx \right)^{1/pt} = \left( \frac{1}{|B|} \int_{B} [w^{p}(x)]^{t} dx \right)^{1/pt} \leq C \left( \frac{1}{|B|} \int_{B} w^{p}(x) dx \right)^{1/p} 
\]
\[
\leq C \frac{1}{|B|} \int_{B} w(x) dx,
\]
where the last inequality follows from the Jensen's inequality. This implies that $p \, t < r_w$ for all $t < r_{w^{p}}$, thus $p \cdot r_{w^{p}} \leq r_w$.

By the other hand, since $0 < p < 1$ and $w^{1/p} \in \mathcal{A}_{1}$ we have that $w \in RH_{1/p}$. So $1/p < r_{w}$, taking $1/p < t < r_{w}$ it follows that $1 < pt < t$, and therefore $w \in RH_{pt}$. Then
\[
\left( \frac{1}{|B|} \int_{B} [w^{p}(x)]^{t} dx \right)^{1/t} = \left( \frac{1}{|B|} \int_{B} [w(x)]^{tp} dx \right)^{p/pt} \leq C \left( \frac{1}{|B|} \int_{B} w(x) dx \right)^{p} 
 \]
\[
= C \left( \frac{1}{|B|} \int_{B} [w^{p}(x)]^{1/p} dx \right)^{p} \leq C \frac{1}{|B|} \int_{B} [w^{p}(x)] dx,
\]
where the last inequality it follows from that  $w^{p} \in RH_{1/p}$. So $t < r_{w^{p}}$ for all $t < r_{w}$, this gives $r_{w} \leq r_{w^{p}}$.
\end{proof}

\begin{lemma} \label{ineq rwpqs}
Let  $0 < p < q$. If $w^{q} \in \mathcal{A}_{1}$, then $ p \cdot r_{w^{p}} \leq q \cdot r_{w^{q}}$.
\end{lemma}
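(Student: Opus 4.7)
My plan is to adapt the argument used in Lemma 20. The first step is to extract the reverse H\"older bound $w^{p}\in RH_{q/p}$ from the hypothesis $w^{q}\in\mathcal{A}_{1}$. From the $\mathcal{A}_{1}$ inequality
\[
\frac{1}{|B|}\int_{B}w^{q}\,dx\leq C\,\mathrm{ess\,inf}_{B}\,w^{q}=C\,(\mathrm{ess\,inf}_{B}\,w)^{q},
\]
together with the trivial bound $\mathrm{ess\,inf}_{B}\,w\leq\bigl(\frac{1}{|B|}\int_{B}w^{p}\,dx\bigr)^{1/p}$ (since $(\mathrm{ess\,inf}_{B}w)^{p}\leq w^{p}$ a.e.\ on $B$), one deduces after raising to the $p/q$-th power that
\[
\left(\frac{1}{|B|}\int_{B}w^{q}\,dx\right)^{p/q}\leq C^{p/q}\,\frac{1}{|B|}\int_{B}w^{p}\,dx,
\]
which is exactly the $RH_{q/p}$ condition for $w^{p}$. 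By the self-improvement property of reverse H\"older classes, $w^{p}\in RH_{q/p+\varepsilon}$ for some $\varepsilon>0$, and in particular $r_{w^{p}}>q/p$.

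Next I fix any $s$ with $q/p<s<r_{w^{p}}$, so that $w^{p}\in RH_{s}$ and $ps/q>1$. The core computation is to show that $w^{q}\in RH_{ps/q}$. Starting from the reverse H\"older inequality for $w^{p}$ at level $s$ and raising both sides to the power $q/p$, one obtains
\[
\left(\frac{1}{|B|}\int_{B}(w^{q})^{ps/q}\,dx\right)^{q/(ps)}\leq[w^{p}]_{RH_{s}}^{q/p}\left(\frac{1}{|B|}\int_{B}w^{p}\,dx\right)^{q/p}.
\]
Now Jensen's inequality applied to the convex function $\phi(t)=t^{q/p}$ (convex because $q/p>1$) and $f=w^{p}$ gives
\[
\left(\frac{1}{|B|}\int_{B}w^{p}\,dx\right)^{q/p}\leq\frac{1}{|B|}\int_{B}w^{q}\,dx.
\]
Concatenating the two displayed bounds produces exactly $w^{q}\in RH_{ps/q}$, so $ps/q\leq r_{w^{q}}$.

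Letting $s\nearrow r_{w^{p}}$ then yields $(p/q)\,r_{w^{p}}\leq r_{w^{q}}$, which is the desired inequality $p\cdot r_{w^{p}}\leq q\cdot r_{w^{q}}$. The only subtle point is guaranteeing that $s$ can be chosen strictly larger than $q/p$, for otherwise $ps/q$ would fail to be a genuine reverse H\"older exponent; this is precisely where the $\mathcal{A}_{1}$ hypothesis intervenes, via the first step that forces $r_{w^{p}}>q/p$. Apart from this bookkeeping, the remainder of the argument is a routine combination of reverse H\"older at level $s$ for $w^{p}$ with a single application of Jensen's inequality, so I do not expect any serious technical obstacle beyond keeping the exponents straight.
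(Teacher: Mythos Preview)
Your proof is correct and follows essentially the same route as the paper's: from $w^{q}\in\mathcal{A}_{1}$ deduce $w^{p}\in RH_{q/p}$ (hence $r_{w^{p}}>q/p$), then for $q/p<s<r_{w^{p}}$ combine the $RH_{s}$ inequality for $w^{p}$ with Jensen's inequality (convexity of $t\mapsto t^{q/p}$) to obtain $w^{q}\in RH_{ps/q}$, and let $s\to r_{w^{p}}$. Your derivation of $w^{p}\in RH_{q/p}$ is spelled out a bit more explicitly than in the paper, but the argument is the same.
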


\begin{proof} Since $w^{q} \in \mathcal{A}_1$ and $0 < p < q$ we have that $w^{p} \in \mathcal{A}_1$ and $w^{p} \in RH_{q/p}$. thus $q/p < r_{w^{p}}$. Taking $q/p < s < r_{w^{p}}$ we have that $w^{p} \in RH_{s}$ and $1 < p s /q < s$, so 
\[
\left( \frac{1}{|B|} \int_{B} [w^{q}(x)]^{ps/q} dx \right)^{q/ps} = \left( \frac{1}{|B|} \int_{B} [w^{p}(x)]^{s} dx \right)^{q/ps} \leq C \left( \frac{1}{|B|} \int_{B} w^{p}(x) dx \right)^{q/p} 
 \]
\[
\leq C \frac{1}{|B|} \int_{B} w^{q}(x) dx,
\]
where the last inequality follows from the Jensen's inequality. This implies that $\frac{p}{q} \, s  < r_{w^{q}}$ for all $s < r_{w^{p}}$, thus $p \cdot r_{w^{p}} \leq q \cdot r_{w^{q}}$.
\end{proof}

\begin{proposition} \label{Riesz pot on atoms} For $0 < \alpha < n$, let $I_{\alpha}$ be the Riesz potential defined in (\ref{Ia}) and let  $w^{1/s} \in \mathcal{A}_1$, $0 < s <\frac{n}{n + \alpha}$, with $\frac{r_{w} }{r_{w} - 1} < \frac{n}{\alpha}$. 
If  $s \leq p \leq \frac{n}{n + \alpha}$ and $\frac{1}{q} = \frac{1}{p} - \frac{\alpha}{n}$, then $I_{\alpha }a(\cdot)$ is a 
$w^{q} - (q, q_0, \lfloor n (\frac{1}{q} - 1) \rfloor)$ molecule for each $w^{p} - (p, p_0, 2 \lfloor n (\frac{1}{q} - 1) \rfloor + 3 + \lfloor \alpha \rfloor + n )$ atom $a(\cdot)$, where $\frac{r_{w} }{r_{w} - 1} < p_0 < \frac{n}{\alpha}$ and $\frac{1}{q_0} = \frac{1}{p_0} - \frac{\alpha}{n}$.
\end{proposition}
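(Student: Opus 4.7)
The plan is to verify the three molecular conditions $(m1)$, $(m2)$, $(m3)$ of Definition 11 for $I_{\alpha}a$, interpreted as a $w^{q}$--$(q, q_{0}, d')$ molecule centered at the ball $B = B(x_{0}, r)$ supporting the atom, with $d' := \lfloor n(\tfrac{1}{q}-1)\rfloor \geq 0$. The key preliminary I will invoke throughout: the range $s \leq p \leq n/(n+\alpha)$ forces $\tfrac{1}{q} = \tfrac{1}{p} - \tfrac{\alpha}{n} \geq 1$, so $q \leq 1$, and both $ps$ and $qs$ lie strictly below $1$. Applying Remark 1 to $w^{1/s} \in \mathcal{A}_{1}$ therefore yields $w^{p} = (w^{1/s})^{ps} \in \mathcal{A}_{1}$ and $w^{q} = (w^{1/s})^{qs} \in \mathcal{A}_{1} \subset \mathcal{A}_{\infty}$; the Stromberg--Wheeden criterion then gives $w^{p} \in RH_{q/p}$, and Lemma 6 produces the bridging inequality
\[
[w^{p}(B)]^{-1/p}[w^{q}(B)]^{1/q} \lesssim |B|^{-\alpha/n} \sim r^{-\alpha}.
\]
With this in hand, $(m1)$ is immediate: since $1 < p_{0} < n/\alpha$, the Sobolev inequality gives $\|I_{\alpha}a\|_{L^{q_{0}}} \lesssim \|a\|_{L^{p_{0}}} \leq |B|^{1/p_{0}}[w^{p}(B)]^{-1/p}$, and combining $\tfrac{1}{p_{0}} - \tfrac{1}{q_{0}} = \tfrac{\alpha}{n}$ with the bridging inequality converts this bound into $|B|^{1/q_{0}}[w^{q}(B)]^{-1/q}$.

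For $(m2)$, outside $2B$ I expand the Riesz kernel $y \mapsto |x-y|^{-(n-\alpha)}$ by its Taylor polynomial of degree $d = 2d'+3+\lfloor\alpha\rfloor+n$ around $x_{0}$ and use $(a3)$ to subtract this polynomial inside the integral defining $I_{\alpha}a(x)$. The standard remainder estimate, together with $\|a\|_{1} \lesssim r^{n}[w^{p}(B)]^{-1/p}$ (from H\"older and $(a2)$), produces
\[
|I_{\alpha}a(x)| \lesssim \frac{r^{n+d+1}}{|x-x_{0}|^{n-\alpha+d+1}}\,[w^{p}(B)]^{-1/p}, \qquad x \notin 2B,
\]
exactly as in the computation for (\ref{Ta}). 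Inserting the bridging inequality replaces $[w^{p}(B)]^{-1/p}$ with $r^{-\alpha}[w^{q}(B)]^{-1/q}$, and the target molecular decay $[w^{q}(B)]^{-1/q}(r/|x-x_{0}|)^{2n+2d'+3}$ reduces to the elementary inequality $r^{E} \leq |x-x_{0}|^{E}$ with exponent $E = d - n - \alpha - 2d' - 2 = 1 + \lfloor\alpha\rfloor - \alpha \in [0,1)$, which is immediate for $|x-x_{0}| \geq 2r$.

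For $(m3)$, the estimates from $(m1)$ and $(m2)$ already force $x^{\beta}I_{\alpha}a \in L^{1}(\mathbb{R}^{n})$ for every $|\beta| \leq d'$, so the moment equals a constant multiple of $\partial^{\beta}\widehat{I_{\alpha}a}(0)$. Since $\widehat{I_{\alpha}a}(\xi) = c|\xi|^{-\alpha}\widehat{a}(\xi)$, Leibniz gives
\[
\partial^{\beta}\bigl(|\xi|^{-\alpha}\widehat{a}(\xi)\bigr) = \sum_{\gamma \leq \beta} c_{\gamma}\,(\partial^{\gamma}|\xi|^{-\alpha})\,(\partial^{\beta-\gamma}\widehat{a})(\xi),
\]
where $\partial^{\gamma}|\xi|^{-\alpha} = O(|\xi|^{-\alpha-|\gamma|})$ by homogeneity and $\partial^{\beta-\gamma}\widehat{a}(\xi) = O(|\xi|^{d+1-|\beta-\gamma|})$ near the origin by the high-order moment cancellation $(a3)$ of the atom. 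Each summand is therefore $O(|\xi|^{d+1-|\beta|-\alpha})$, and the exponent is strictly positive for $|\beta| \leq d'$ because $d + 1 - d' - \alpha > 0$ by construction of $d$; letting $\xi \to 0$ as in the computation around (\ref{limite}) gives $\partial^{\beta}\widehat{I_{\alpha}a}(0) = 0$.

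I expect the exponent bookkeeping in $(m2)$ to be the main obstacle: the choice $d = 2d'+3+\lfloor\alpha\rfloor+n$ has to simultaneously manufacture the polynomial decay $(1+|x-x_{0}|/r)^{-2n-2d'-3}$ required of a $w^{q}$--molecule of moment order $d'$ and absorb the $r^{-\alpha}$ factor generated by Lemma 6, leaving the residual exponent $E$ as the non-negative fractional-part gap $1+\lfloor\alpha\rfloor - \alpha$. Every other step reduces to a direct combination of the $\mathcal{A}_{1}$--preservation in Remark 1, Sobolev, Lemma 6, and the Fourier-side moment analysis already carried out in Section 4.1.
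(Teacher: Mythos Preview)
Your argument is correct and follows the paper's approach closely: the verification of $(m1)$ via Sobolev plus Lemma~6, of $(m2)$ via the Taylor remainder plus Lemma~6, and of $(m3)$ via the Fourier-side Leibniz computation is exactly what the paper does (the paper simply cites \cite{T-W} for $(m3)$ rather than spelling it out, and says ``a simple computation'' where you carry out the exponent bookkeeping in $(m2)$ explicitly).

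There is, however, one step in the paper that you omit. Definition~11 requires the integrability index of a $w^{q}$--$(q,q_{0},d')$ molecule to satisfy $\max\{1,\, q\,\tfrac{r_{w^{q}}}{r_{w^{q}}-1}\} < q_{0}$, and the proposition only gives you the hypothesis $\tfrac{r_{w}}{r_{w}-1} < p_{0}$ on $p_{0}$. The paper spends its first paragraph deducing $q\,\tfrac{r_{w^{q}}}{r_{w^{q}}-1} < q_{0}$ from this, using Lemma~20 ($p\cdot r_{w^{p}} \leq r_{w}$ when $w^{1/p}\in\mathcal{A}_{1}$) and Lemma~21 ($p\cdot r_{w^{p}} \leq q\cdot r_{w^{q}}$ when $w^{q}\in\mathcal{A}_{1}$) together with the monotonicity of $t\mapsto t/(t-1)$. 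Without this check the claim ``$I_{\alpha}a$ is a $w^{q}$--$(q,q_{0},d')$ molecule'' is not well-posed in the sense of Definition~11, even though your verification of $(m1)$--$(m3)$ is complete. You should insert this parameter check at the start of your proof.
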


\begin{proof}
 The condition $w^{1/s} \in \mathcal{A}_1$ implies that $w^{p}$ and $w^{q}$ belong to $\mathcal{A}_1$, so $\widetilde{q}_{w^{p}} = \widetilde{q}_{w^{q}} =1$. We observe that $2 \lfloor n (\frac{1}{q} - 1) \rfloor + 3 + \lfloor \alpha \rfloor + n >  \lfloor n (\frac{1}{p} - 1) \rfloor$, thus $a(\cdot)$ is an atom with additional vanishing moments.

Now we shall see that $p \frac{r_{w^{p}}}{r_{w^{p}} - 1} < p_0$ and  $q \frac{r_{w^{q}}}{r_{w^{q}} - 1} < q_0$. The condition  $p \frac{r_{w^{p}}}{r_{w^{p}} - 1} < p_0$ is required in the definition of atom and  $q \frac{r_{w^{q}}}{r_{w^{q}} - 1} < q_0$ in the definition of molecule.

By Lemma \ref{desig rwps} and by hypothesis we have that
\begin{equation}
p \frac{r_{w^{p}}}{r_{w^{p}} - 1} \leq \frac{r_w}{r_w - 1} < p_0.\,\, \label{est1}
\end{equation}
Lemma \ref{ineq rwpqs} and the fact that the function $t \rightarrow \frac{t}{t-1}$ is decreasing on the region $(1, + \infty)$ imply that 
\begin{equation}
\frac{r_{w^{q}}}{r_{w^{q}} - \frac{p}{q}} \leq \frac{r_{w^{p}}}{r_{w^{p}}-1} . \label{est2}
\end{equation}
If $\frac{1}{q_0} = \frac{1}{p_0} - \frac{\alpha}{n}$, from (\ref{est1}) we have
\[
\frac{1}{q_0} <  \frac{r_{w^{p}} - 1}{p \, r_{w^{p}}} - \frac{\alpha}{n},
\]
from (\ref{est2}) we obtain
\[
\frac{1}{q_0} < \frac{r_{w^{q}} - \frac{p}{q}}{p \, r_{w^{q}}} - \frac{\alpha}{n} = \frac{1}{p}\left( 1 - \frac{p}{q \, r_{w^{q}}} \right) - \frac{\alpha}{n} =  \frac{r_{w^{q}}-1}{q \, r_{w^{q}}}.
\]
So $q \frac{r_{w^{q}}}{r_{w^{q}} - 1} < q_0$. 

Now we will show that $I_{\alpha}a(\cdot)$ satisfies the conditions $(m1)$, $(m2)$ and $(m3)$ in the definition of molecule, if $a(\cdot)$ is a  $w^{p} - (p, p_0, 2 \lfloor n (\frac{1}{q} - 1) \rfloor + 3 + \lfloor \alpha \rfloor + n )$ atom.

Since $I_{\alpha}$ is bounded from $L^{p_0}(\mathbb{R}^{n})$ into $L^{q_0}(\mathbb{R}^{n})$ and $w^{p} \in RH_{q/p}$, by 
Lemma \ref{desig RH}, we get
\[
\| I_{\alpha} a\|_{L^{q_0}(B(x_0, 2r))} \leq C \| a \|_{L^{p_0}(\mathbb{R}^{n})} \leq C |B|^{1/p_0} (w^{p}(B))^{-1/p} \leq C |B|^{1/q_0} (w^{q}(B))^{-1/q},
\]
so $I_{\alpha}a(\cdot)$ satisfies $(m1)$.

Let $d = 2 \lfloor n (\frac{1}{q} - 1) \rfloor + 3 + \lfloor \alpha \rfloor + n$, and let $a(\cdot)$ be a $w^{p} - (p, p_0, d)$ atom supported on the ball $B(x_0, r)$. In view of the moment condition of $a(\cdot)$ we obtain
\[
I_{\alpha}a(x) = \int_{B(x_0, r)} \left( |x-y|^{\alpha- n} - q_{d}(x,y) \right) a(y) dy, \,\,\,\,\,\, \textit{for all} \,\,\, x \notin B(x_0, 2r),
\]
where $q_{d}$ is the degree $d$ Taylor polynomial of the function $y \rightarrow |x-y|^{\alpha - n}$ expanded around $x_0$. By the standard estimate of the remainder term of the Taylor expansion, there exists $\xi$ between  $y$ and $x_0$ such that
\[
\left| |x-y|^{\alpha- n} - q_{d}(x,y) \right| \leq C |y - x_0|^{d +1} |x - \xi|^{-n+ \alpha -d -1},
\]
for any $y  \in B(x_0, r)$ and any $x \notin B(x_0, 2r)$, since $|x - \xi| \geq \frac{|x-x_0|}{2}$, we get
\[
\left||x-y|^{\alpha- n} - q_{d}(x,y) \right| \leq C r^{d +1} |x - x_0|^{-n+ \alpha -d -1},
\]
this inequality and the condition $(a2)$ allow us to conclude that
\begin{equation}
|I_{\alpha}a(x) | \leq C \frac{r^{n+d +1}}{(w^{p}(B))^{1/p}} |x-x_0|^{-n + \alpha - d -1}, \,\,\,\,\,\, for \,\, all \,\,\,\, x \notin B(x_0, 2r), \label{Ialfa}
\end{equation}
Lemma \ref{desig RH} and a simple computation gives
\[
|I_{\alpha}a(x) | \leq C (w^{q}(B))^{-1/q} \left(1 +  \frac{|x-x_0|}{r} \right)^{-2n - 2 d_q -3}, \,\,\,\,\,\, for \,\, all \,\,\,\, x \notin B(x_0, 2r),
\]
where $d_q = \lfloor n (\frac{1}{q} - 1) \rfloor$. So $I_{\alpha}a(\cdot)$ satisfies $(m2)$.

Finally, in \cite{T-W} Taibleson and Weiss proved that
\[
\int_{\mathbb{R}^{n}} x^{\beta} I_{\alpha}a(x) dx =0,
\]
for all $0 \leq |\beta| \leq \lfloor n (\frac{1}{q} -1) \rfloor$. This shows that $I_{\alpha}a(\cdot)$ is a $w^{q}$-molecule. The proof of the proposition is therefore concluded.
\end{proof}

\begin{theorem} \label{Hpw-Hqw bound for Riesz} For $0<\alpha <n$, let $I_{\alpha }$ be the Riesz potential defined in (\ref{Ia}). If $w^{1/s} \in \mathcal{A}_1$ with $0 < s <\frac{n}{n + \alpha}$ and $\frac{r_{w} }{r_{w} - 1} < \frac{n}{\alpha}$,  then $I_{\alpha }$ can be extended to an $H^{p}_{w^{p}}\left( \mathbb{R}^{n}\right) - H^{q}_{w^{q}}\left(\mathbb{R}^{n}\right)$ bounded operator for each $s \leq p \leq \frac{n}{n + \alpha}$ and $\frac{1}{q} = \frac{1}{p} - \frac{\alpha}{n}$.
\end{theorem}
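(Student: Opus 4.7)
The plan is to combine Theorem~9 (weighted atomic decomposition), Proposition~22 (images of atoms are molecules), and Theorem~13 (molecular bound for $H^p_w$), using the density of $\widehat{\mathcal{D}}_0$ in both endpoint spaces. First I would fix $f\in\widehat{\mathcal{D}}_0$ and pick auxiliary exponents $p_0\in(r_w/(r_w-1),n/\alpha)$ and $q_0$ with $1/q_0=1/p_0-\alpha/n$, so that $I_\alpha\colon L^{p_0}(\mathbb{R}^n)\to L^{q_0}(\mathbb{R}^n)$ is bounded by the classical Sobolev estimate. Applying Theorem~9 to $f$ with the weight $w^p$ and with moment order $d=2\lfloor n(1/q-1)\rfloor+3+\lfloor\alpha\rfloor+n$ (which is permissible because $d$ exceeds the minimum $\lfloor n(\widetilde{q}_{w^p}/p-1)\rfloor=\lfloor n(1/p-1)\rfloor$ needed to produce atoms in $H^p_{w^p}$) yields positive scalars $\{\lambda_j\}$ and $w^p$-$(p,p_0,d)$ atoms $\{a_j\}$ with $f=\sum_j\lambda_j a_j$ in $L^{p_0}(\mathbb{R}^n)$ and $\sum_j\lambda_j^{p}\lesssim\|f\|_{H^p_{w^p}}^{p}$.

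Next, applying $I_\alpha$ and using its $L^{p_0}\to L^{q_0}$ boundedness, the series $\sum_j\lambda_j I_\alpha a_j$ converges to $I_\alpha f$ in $L^{q_0}(\mathbb{R}^n)$, hence in $\mathcal{S}'(\mathbb{R}^n)$. Proposition~22 tells us that there is an absolute constant $C>0$ such that each $C^{-1}I_\alpha a_j$ is a $w^q$-$(q,q_0,\lfloor n(1/q-1)\rfloor)$ molecule centered at the ball $B_j$ containing the support of $a_j$. Then Theorem~13, applied with weight $w^q$, exponent $q\le 1$, and the sequence $\{C\lambda_j\}$, gives
\[
\|I_\alpha f\|_{H^q_{w^q}}^{q}\;\le\;C_{w,q,q_0}\sum_j (C\lambda_j)^{q}\;\lesssim\;\sum_j\lambda_j^{q}.
\]

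Finally, to pass from an $\ell^q$-sum to an $\ell^p$-sum I would invoke the elementary inclusion $\ell^p\hookrightarrow\ell^q$ valid for $0<p\le q$, i.e.\ $\bigl(\sum_j\lambda_j^q\bigr)^{1/q}\le\bigl(\sum_j\lambda_j^p\bigr)^{1/p}$. Combining this with the previous bound yields
\[
\|I_\alpha f\|_{H^q_{w^q}}\;\lesssim\;\Bigl(\sum_j\lambda_j^{p}\Bigr)^{1/p}\;\lesssim\;\|f\|_{H^p_{w^p}},
\]
for every $f\in\widehat{\mathcal{D}}_0$. The theorem then follows from the density of $\widehat{\mathcal{D}}_0$ in $H^p_{w^p}(\mathbb{R}^n)$ (Theorem~7), which extends $I_\alpha$ by continuity to the desired bounded operator.

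The main technical obstacle is checking that the parameters are mutually compatible with the hypotheses of both Theorem~9 and Proposition~22: one must verify that the intermediate exponent $p_0$ can be chosen to simultaneously satisfy $p\,r_{w^p}/(r_{w^p}-1)<p_0$, $q\,r_{w^q}/(r_{w^q}-1)<q_0$, and $p_0<n/\alpha$. This compatibility was already arranged inside Proposition~22 by means of Lemmas~20 and~21 (controlling $r_{w^p}$ and $r_{w^q}$ in terms of $r_w$), so here it reduces to quoting those estimates. Once the parameters are fixed, the rest is a careful bookkeeping of convergence (distributional versus $L^{q_0}$) and of the constants produced by the atomic and molecular bounds.
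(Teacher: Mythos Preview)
Your proposal is correct and follows essentially the same route as the paper's own proof: decompose $f\in\widehat{\mathcal{D}}_0$ into $w^{p}$-atoms with extra vanishing moments via Theorem~9, push the decomposition through $I_\alpha$ using the $L^{p_0}\to L^{q_0}$ boundedness, invoke Proposition~22 to identify the images as $w^{q}$-molecules, apply Theorem~13, use the embedding $\ell^{p}\hookrightarrow\ell^{q}$, and finish by density. The parameter-compatibility check you flag (handled through Lemmas~20 and~21) is exactly what the paper does as well.
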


\begin{proof} Let $\frac{1}{q} = \frac{1}{p} - \frac{\alpha}{n}$. For the range $p \leq \frac{n}{n + \alpha}$ we have that $p < q \leq 1$. If $p \in [s, \frac{n}{n + \alpha}]$, the condition $w^{1/s} \in \mathcal{A}_1$, $0 < s <\frac{n}{n + \alpha}$, implies that 
$w$, $w^{1/p}$ and $w^{p}$ belong to $\mathcal{A}_1$, so $w^{q} \in \mathcal{A}_1$. Then $\widetilde{q}_{w^{p}} = \widetilde{q}_{w^{q}} = 1$. We put $d_p = \lfloor n(\frac{1}{p} - 1) \rfloor$ and $d_q =  \lfloor n(\frac{1}{q} - 1) \rfloor$. We recall that in the atomic decomposition, we can always choose atoms with additional vanishing moments (see Corollary 2.1.5 pp. 105 in \cite{stein}). This is, if $l$ is any fixed integer with $l > d_p$, then we have an atomic decomposition such that all moments up to order $l$ of our atoms are zero.

For $\frac{r_{w} }{r_{w} - 1} < p_0 < \frac{n}{\alpha}$ we consider $\frac{1}{q_0} = \frac{1}{p_0} - \frac{\alpha}{n}$. We observe that $2 \lfloor n (\frac{1}{q} - 1) \rfloor + 3 + \lfloor \alpha \rfloor + n > \lfloor n (\frac{1}{p} - 1) \rfloor$. Since $w^{1/p} \in \mathcal{A}_1$, from Lemma \ref{desig rwps}, we have $p \frac{r_{w^{p}}}{r_{w^{p}} - 1} \leq  \frac{r_w}{r_w - 1} < p_0$. Thus, given $f \in \widehat{\mathcal{D}}_0$ we can write $f = \sum_{j} \lambda_j a_j$, where $a_j$ are $w^{p} - (p, p_0, 2 \lfloor n (\frac{1}{q} - 1) \rfloor + 3 + \lfloor \alpha \rfloor + n  )$ atoms, $\sum_j  |\lambda_j |^{p} \lesssim \| f \|_{H^{p}_{w^{p}}}^{p}$ and the series converges in 
$L^{p_0}(\mathbb{R}^{n})$. Since $I_{\alpha}$ is a 
$L^{p_0}(\mathbb{R}^{n}) - L^{q_0}(\mathbb{R}^{n})$ bounded operator it follows that $I_{\alpha} f = \sum_{j} \lambda_j I_{\alpha} a_j$ in $L^{q_0}(\mathbb{R}^{n})$ and therefore in $\mathcal{S}'(\mathbb{R}^{n})$. By Proposition \ref{Riesz pot on atoms}, we have that the operator $I_{\alpha}$ maps $w^{p} - (p, p_0, 2 \lfloor n (\frac{1}{q} - 1) \rfloor + 3 + \lfloor \alpha \rfloor + n  ))$ atoms $a(\cdot)$ to $w^{q} - (q, q_0, d_q)$ molecules $I_{\alpha}a(\cdot)$, applying Theorem \ref{Hp estim mol} we get
\[
\| I_{\alpha} f \|_{H^{q}_{w^{q}}}^{q} \lesssim \sum_{j} |\lambda_{j}|^{q} \lesssim \left( \sum_{j} |\lambda_j |^{p} \right)^{q/p} \lesssim \|f \|_{H^{p}_{w^{p}}}^{q},
\]
for all $f \in \widehat{\mathcal{D}}_0$, so the theorem follows from the density of  $\widehat{\mathcal{D}}_0$ in $H^{p}_{w^{p}}(\mathbb{R}^{n})$.
\end{proof}

 For $\frac{n}{n + \alpha} < p \leq 1$, we have that $1 < q \leq \frac{n}{n - \alpha}$. For this range of $q$'s the space $H^{q}_{w}$ can be identify with the space $L^{q}_{w}$. The following theorem contains this range of $p$'s.

\begin{theorem} \label{Hpw-Lqw cota for Riesz} For $0<\alpha <n$, let $I_{\alpha }$ be the Riesz potential defined in (\ref{Ia}). If $w^{\frac{n}{(n-\alpha) \, s}} \in \mathcal{A}_{1}$ with $0 < s < 1$ and $\frac{r_{w} }{r_{w} - 1} < \frac{n}{\alpha}$, 
then $I_{\alpha }$ can be extended to an $H^{p}_{w^{p}}\left( \mathbb{R}^{n}\right) - L^{q}_{w^{q}}\left(\mathbb{R}^{n}\right)$ bounded operator for each $s \leq p \leq 1$ and $\frac{1}{q}=\frac{1}{p}-\frac{\alpha }{n}$.
\end{theorem}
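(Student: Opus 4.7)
The strategy parallels the proof of Theorem 23, with one essential modification: because here $q>1$ in the relevant range $p\in(n/(n+\alpha),1]$, the target is $L^{q}_{w^{q}}$ (which, being at $q>1$, coincides with $H^{q}_{w^{q}}$) and we can combine Minkowski's inequality in $L^{q}_{w^{q}}$ with the elementary inclusion $\ell^{p}\subset\ell^{1}$ for $p\leq 1$, namely $\sum_{j}|\lambda_{j}|\leq(\sum_{j}|\lambda_{j}|^{p})^{1/p}$, instead of appealing to the molecular synthesis Theorem 13.

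First, I fix $p_{0}$ with $\frac{r_{w}}{r_{w}-1}<p_{0}<\frac{n}{\alpha}$ and set $1/q_{0}=1/p_{0}-\alpha/n$, so that Sobolev gives the unweighted bound $I_{\alpha}\colon L^{p_{0}}\to L^{q_{0}}$. For a sufficiently large $d$ (matching the moment count of Proposition 22 adapted to the current $p,q$), Theorem 9 applied to $f\in\widehat{\mathcal D}_{0}$ produces an atomic decomposition $f=\sum_{j}\lambda_{j}a_{j}$ converging in $L^{p_{0}}$, where each $a_{j}$ is a $w^{p}$-$(p,p_{0},d)$ atom centered at a ball $B_{j}$ and $\sum_{j}|\lambda_{j}|^{p}\lesssim\|f\|_{H^{p}_{w^{p}}}^{p}$. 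The $L^{p_{0}}\to L^{q_{0}}$ bound transfers the convergence to $L^{q_{0}}$, hence pointwise a.e.\ along a suitable subsequence of partial sums, whence $|I_{\alpha}f(x)|\leq\sum_{j}|\lambda_{j}|\,|I_{\alpha}a_{j}(x)|$ a.e.

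The heart of the argument is the uniform estimate $\|I_{\alpha}a\|_{L^{q}_{w^{q}}}\leq C$ for every such atom $a$ supported in $B=B(x_{0},r)$. I split the integral as $\int_{2B}+\int_{\mathbb R^{n}\setminus 2B}$. On $2B$ I plan to apply H\"older with exponents $q_{0}/q$ and $(q_{0}/q)'$: the first factor is controlled by the $L^{p_{0}}\to L^{q_{0}}$ bound of $I_{\alpha}$ together with the atomic size condition, giving $\|I_{\alpha}a\|_{L^{q_{0}}(2B)}^{q}\leq C|B|^{q/p_{0}}(w^{p}(B))^{-q/p}$, while the weight factor $(\int_{2B}w^{q(q_{0}/q)'}dx)^{(q_{0}-q)/q_{0}}$ is bounded by $C|B|^{-q/q_{0}}w^{q}(B)$ via a reverse H\"older inequality for $w^{q}$ and doubling; Lemma 6 then collapses $|B|^{q\alpha/n}(w^{p}(B))^{-q/p}w^{q}(B)$ into a constant. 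Off $2B$ I use the Taylor-remainder pointwise bound for $I_{\alpha}a$ as in (\ref{Ialfa}), decompose into dyadic annuli $2^{k}r\leq|x-x_{0}|<2^{k+1}r$, and sum a geometric series using that $w^{q}$ is doubling; this follows from $w^{q}\in\mathcal A_{1}$, itself a consequence of $w^{n/((n-\alpha)s)}\in\mathcal A_{1}$ and Remark 1, since $q\leq n/(n-\alpha)\leq n/((n-\alpha)s)$. Lemma 6 once more packages the resulting expression $r^{q\alpha}w^{q}(B)/(w^{p}(B))^{q/p}$ into a constant independent of $a$.

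With the uniform bound in hand, Minkowski in $L^{q}_{w^{q}}$ (legitimate because $q>1$) and the $\ell^{p}\hookrightarrow\ell^{1}$ inclusion yield
\[
\|I_{\alpha}f\|_{L^{q}_{w^{q}}}\leq\sum_{j}|\lambda_{j}|\,\|I_{\alpha}a_{j}\|_{L^{q}_{w^{q}}}\leq C\sum_{j}|\lambda_{j}|\leq C\Bigl(\sum_{j}|\lambda_{j}|^{p}\Bigr)^{1/p}\lesssim\|f\|_{H^{p}_{w^{p}}}
\]
for every $f\in\widehat{\mathcal D}_{0}$; the extension to all of $H^{p}_{w^{p}}$ then follows from the density of $\widehat{\mathcal D}_{0}$ (Theorem 7). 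The main obstacle is the two-weight bookkeeping in the near-field H\"older step: one must verify that the single hypothesis $w^{n/((n-\alpha)s)}\in\mathcal A_{1}$, valid uniformly for $s\leq p\leq 1$, supplies both the $\mathcal A_{1}$ doubling of $w^{q}$ needed in the far field and the reverse H\"older exponents on $w^{p}$ (for Lemma 6) and on $w^{q}$ (for controlling $\int_{2B}w^{q(q_{0}/q)'}dx$); this parallels and slightly extends the critical-index computations of Lemmas 20 and 21, and in particular requires choosing $p_{0}$ not only in $(\frac{r_{w}}{r_{w}-1},\frac{n}{\alpha})$ but also large enough that the exponent $q(q_{0}/q)'$ stays below $n/((n-\alpha)s)$, which is possible precisely because $s<1$.
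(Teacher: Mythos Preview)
Your overall strategy is correct and closely follows the paper's proof: atomic decomposition via Theorem~9, passage to a pointwise inequality through the $L^{p_0}\to L^{q_0}$ boundedness of $I_\alpha$, a uniform bound $\|I_\alpha a\|_{L^q_{w^q}}\leq C$ on atoms proved by a near-field/far-field split, and density. Two points deserve comment.

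First, you restrict to $q>1$ (equivalently $p>n/(n+\alpha)$), but the theorem as stated covers the full range $s\leq p\leq 1$, which for $s<n/(n+\alpha)$ includes values of $p$ with $q\leq 1$. The paper handles both regimes at once by writing
\[
\|I_\alpha f\|_{L^q_{w^q}}\leq C\Bigl(\sum_j|\lambda_j|^{\min\{1,q\}}\Bigr)^{1/\min\{1,q\}}:
\]
for $q\geq 1$ this is Minkowski, for $q<1$ it is the elementary sub-additivity $\|\sum g_j\|_{L^q}^q\leq\sum\|g_j\|_{L^q}^q$. Your argument extends to the missing range with this single modification, since $p\leq\min\{1,q\}$ always.

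Second, your far-field estimate is genuinely different from, and more elementary than, the paper's. You decompose $\mathbb{R}^n\setminus 2B$ into dyadic annuli, use the $\mathcal{A}_1$ doubling of $w^q$ to control $w^q(2^{k+1}B)\lesssim 2^{kn}w^q(B)$, sum a geometric series (convergent once $d\geq\lfloor n(1/p-1)\rfloor$), and close with Lemma~6. The paper instead bounds $|I_\alpha a|$ pointwise by a power of the fractional maximal function $M_{\alpha n/(n+d+1)}(\chi_B)$ and invokes the Muckenhoupt--Wheeden weighted inequality (Theorem~5), which requires checking an auxiliary $\mathcal{A}_{\tilde p,\tilde q}$ condition. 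Your route avoids Theorem~5 entirely; the paper's route is slicker once that machinery is available. In the near field the two arguments coincide; your verification of $w^q\in RH_{(q_0/q)'}$ via the observation that $q(q_0/q)'\to q\leq n/(n-\alpha)<n/((n-\alpha)s)$ as $p_0\to n/\alpha$ (so that $w^{q(q_0/q)'}\in\mathcal{A}_1$) is a clean alternative to the paper's appeal to the critical-index comparisons of Lemmas~20--21.
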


\begin{proof}
The condition $w^{n/(n - \alpha)s} \in \mathcal{A}_1$, $0 < s < 1 < \frac{n}{n - \alpha}$, implies that 
$w$, $w^{1/p}$, $w^{p}$ and $w^{q}$ belong to $\mathcal{A}_1$, for all $s \leq p \leq 1$ and $\frac{1}{q} = \frac{1}{p} - \frac{\alpha}{n}$.

We take $p_0$ such that $\frac{r_w}{r_w - 1} < p_0 < \frac{n}{\alpha}$, from lemma \ref{desig rwps}, we have that $p \frac{r_{w^{p}}}{r_{w^{p}} -1} \leq \frac{r_w}{r_w -1} < p_0$. Given $f \in \widehat{\mathcal{D}}_{0}$ we can write $f = \sum \lambda_j a_j$ where the $a_j$'s are $w^{p} - (p, p_0, d)$ atoms, the scalars $\lambda_j$ satisfies $\sum_{j} |\lambda_j |^{p} \lesssim \| f \|_{H^{p}_{w^{p}}}^{p}$  and the series converges in $L^{p_0}(\mathbb{R}^{n})$. For $\frac{1}{q_0} = \frac{1}{p_0} - \frac{\alpha}{n}$, $I_{\alpha}$ is a bounded operator from $L^{p_0}(\mathbb{R}^{n})$ into $L^{q_0}(\mathbb{R}^{n})$, since $f = \sum_j \lambda_j a_j$ in $L^{p_0}(\mathbb{R}^{n})$, we have that
\begin{equation}
|I_{\alpha}f(x)| \leq \sum_{j} |\lambda_j| |I_{\alpha}a_j(x)|, \,\,\,\,\, \textit{a.e.} x \in \mathbb{R}^{n}. \label{puntual}
\end{equation}
If $\|I_{\alpha} a_j \|_{L^{q}_{w^{q}}} \leq C$, with $C$ independent of the $ w^{p} - (p, p_0, d)$ atom $a_j(\cdot)$, then (\ref{puntual}) allows us to obtain
\[
\|I_{\alpha} f \|_{L^{q}_{w^{q}}} \leq C \left( \sum_{j} |\lambda_j|^{\min\{1, q \}} \right)^{\frac{1}{\min\{1, q \}}} \leq C \left( \sum_{j} |\lambda_j |^{p} \right)^{1/p} \lesssim \| f \|_{H^{p}_{w^{p}}},
\] 
for all $f \in \widehat{\mathcal{D}}_0$, so the theorem follows from the density of  $\widehat{\mathcal{D}}_0$ in $H^{p}_{w^{p}}(\mathbb{R}^{n})$.

\

To conclude the proof we will prove that there exists $C > 0$ such that 
\begin{equation}
\|I_{\alpha} a \|_{L^{q}_{w^{q}}} \leq C, \,\,\,\, \textit{for all} \,\, w^{p} - (p, p_0, d) \,\, \textit{atom} \,\, a(\cdot). \label{uniform estimate}
\end{equation}
To prove (\ref{uniform estimate}), let $2B=B(x_0, 2r)$, where $B=B(x_0, r)$ is the ball containing the support of the atom $a(\cdot)$. So
\[
\int_{\mathbb{R}^{n}} |I_{\alpha}a(x)|^{q} w^{q}(x) dx = \int_{2B} |I_{\alpha}a(x)|^{q} w^{q}(x) dx + \int_{\mathbb{R}^{n} \setminus 2B} |I_{\alpha}a(x)|^{q} w^{q}(x) dx 
\]
To estimate the first term in the right-side of this equality, we apply H\"older's inequality with $\frac{q_0}{q}$ and use that $w^{q} \in RH_{(\frac{q_0}{q})'}$ ($q_0 > q \frac{r_{w^{q}}}{r_{w^{q}}-1})$, thus
\[
\int_{2B} |I_{\alpha}a(x)|^{q} w^{q}(x) dx \leq \|I_{\alpha} a \|_{L^{q_0}}^{q} \left( \int_{2B} [w^{q}(x)]^{(\frac{q_0}{q})'} dx \right)^{1/(\frac{q_0}{q})'}
\]
\[
\leq C |B|^{q/p_0} (w^{p}(B))^{-q/p} |2B|^{1/(\frac{q_0}{q})'} \left( \frac{1}{|2B|} \int_{2B} w^{q}(x) dx \right)
\]
\[
\leq C |B|^{q\alpha/n}  (w^{p}(B))^{-q/p} w^{q}(B).
\]
Lemma \ref{desig RH} gives
\begin{equation}
\int_{2B} |I_{\alpha}a(x)|^{q} w^{q}(x) dx \leq C. \label{Ialfa 2}
\end{equation}
From (\ref{Ialfa}), taking there $d= \lfloor n(\frac{1}{p}-1) \rfloor$, we obtain
\[
|I_{\alpha}a(x)| \leq C (w^{p}(B))^{-1/p}\left[ M_{\frac{\alpha n}{n+d+1}}(\chi_{B}) (x) \right]^{\frac{n+d+1}{n}}, \,\,\,\,\,\, \textit{for all} \,\, x \notin B(x_0, 2r).
\]
So
\begin{equation}
\int_{\mathbb{R}^{n} \setminus 2B} |I_{\alpha}a(x)|^{q} w^{q}(x) dx \leq C (w^{p}(B))^{-q/p} \int_{\mathbb{R}^{n}} \left[ M_{\frac{\alpha n}{n+d+1}}(\chi_{B}) (x) \right]^{q\frac{n+d+1}{n}} w^{q}(x) dx, \label{Ialfa 3}
\end{equation}
Since  $d= \lfloor n(\frac{1}{p}-1) \rfloor$, we have $q \frac{n+d+1}{n} > 1$. We write $\widetilde{q} = q \frac{n+d+1}{n}$ and let $\frac{1}{\widetilde{p}} = \frac{1}{\widetilde{q}} + \frac{\alpha}{n+d+1}$, so $\frac{\widetilde{p}}{\widetilde{q}} = \frac{p}{q}$ and
$w^{q/{\widetilde{q}}} \in \mathcal{A}_{\widetilde{p}, \widetilde{q}}$ (see Remark \ref{A_1 en A_pq}). From Theorem \ref{fract max pesada} we obtain
\[
\int_{\mathbb{R}^{n}} \left[ M_{\frac{\alpha n}{n+d+1}}(\chi_{B}) (x) \right]^{q\frac{n+d+1}{n}} w^{q}(x) dx \leq 
C \left( \int_{\mathbb{R}^{n}} \chi_{B}(x) w^{p}(x) dx \right)^{q/p} = C (w^{p}(B))^{q/p}.
\]
This inequality and (\ref{Ialfa 3}) give
\begin{equation}
\int_{\mathbb{R}^{n} \setminus 2B} |I_{\alpha}a(x)|^{q} w^{q}(x) dx \leq C. \label{Ialfa 4}
\end{equation}
Finally, (\ref{Ialfa 2}) and (\ref{Ialfa 4}) allow us to obtain (\ref{uniform estimate}). This completes the proof.
\end{proof} 

To finish, we recover the classical result obtained by Taibleson and Weiss in \cite{T-W}.

\begin{corollary}
For $0 < \alpha < n$, let $I_{\alpha}$ be the Riesz potential defined in (\ref{Ia}). If $0 < p \leq 1$ and $\frac{1}{q} = \frac{1}{p} - \frac{\alpha}{n}$, then $I_{\alpha}$ can be extended to an $H^{p}\left( \mathbb{R}^{n}\right) - H^{q}\left(\mathbb{R}^{n}\right)$ bounded operator.
\end{corollary}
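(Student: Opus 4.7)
The plan is to deduce this unweighted corollary directly from the two weighted boundedness theorems just proved, by specializing to the constant weight $w \equiv 1$.

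First I would check that with $w \equiv 1$ every hypothesis of Theorems 23 and 24 reduces to something trivial: one has $w^{1/s} \equiv 1 \in \mathcal{A}_1$ for every $s > 0$, and $r_w = +\infty$, so under the convention $r_w/(r_w - 1) = 1$ introduced in Section 2, the requirement $r_w/(r_w-1) < n/\alpha$ is automatic from $\alpha < n$. Furthermore $w^p \equiv 1$ and $w^q \equiv 1$, so $H^{p}_{w^{p}}(\mathbb{R}^n) = H^{p}(\mathbb{R}^n)$, $H^{q}_{w^{q}}(\mathbb{R}^n) = H^{q}(\mathbb{R}^n)$, and $L^{q}_{w^{q}}(\mathbb{R}^n) = L^{q}(\mathbb{R}^n)$.

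Next I would split the range $0 < p \leq 1$ into two cases depending on whether $q \leq 1$ or $q > 1$. In the first case, $0 < p \leq n/(n+\alpha)$ gives $q \leq 1$; here I pick any $s$ with $0 < s \leq p$ and $s < n/(n+\alpha)$, and apply Theorem 23 to obtain the $H^p - H^q$ boundedness of $I_{\alpha}$. In the second case, $n/(n+\alpha) < p \leq 1$ gives $1 < q < n/(n-\alpha)$; here I pick any $s$ with $0 < s \leq p$ and $s < 1$, and apply Theorem 24 to obtain the $H^p - L^q$ boundedness of $I_{\alpha}$. Since for $q > 1$ the unweighted Hardy space $H^q(\mathbb{R}^n)$ coincides with $L^q(\mathbb{R}^n)$ (with equivalent norms), as the author remarks just before Theorem 24, this yields the desired $H^p - H^q$ mapping in the second case as well.

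I do not expect any real obstacle beyond bookkeeping: the corollary is essentially a direct specialization of the two weighted theorems. The one point that requires attention is the borderline value $p = n/(n+\alpha)$, where $q = 1$ and $L^1 \neq H^1$; this value must be treated by Theorem 23, not Theorem 24, which is why I place the cutoff at $p \leq n/(n+\alpha)$ versus $p > n/(n+\alpha)$.
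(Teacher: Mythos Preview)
Your proposal is correct and follows exactly the approach of the paper: specialize to $w\equiv 1$, note that then $r_w=+\infty$ so $r_w/(r_w-1)=1<n/\alpha$, and apply Theorems~23 and~24 to cover the ranges $0<p\le n/(n+\alpha)$ and $n/(n+\alpha)<p\le1$ respectively. Your write-up is in fact more careful than the paper's two-line proof, particularly in handling the borderline $p=n/(n+\alpha)$ and in invoking the identification $H^q=L^q$ for $q>1$.
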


\begin{proof} If $w(x) \equiv 1$, then $r_{w} = + \infty$ and therefore $\frac{r_{w}}{r_w - 1} = 1$. Applying the theorems \ref{Hpw-Hqw bound for Riesz} and \ref{Hpw-Lqw cota for Riesz}, with $w \equiv 1$, the corollary follows.
\end{proof}


\end{document}